\newcommand{\shrinkmargins}[1]{
  \addtolength{\textheight}{#1\topmargin}
  \addtolength{\textheight}{#1\topmargin}
  \addtolength{\textwidth}{#1\oddsidemargin}
  \addtolength{\textwidth}{#1\evensidemargin}
  \addtolength{\topmargin}{-#1\topmargin}
  \addtolength{\oddsidemargin}{-#1\oddsidemargin}
  \addtolength{\evensidemargin}{-#1\evensidemargin}
  }
\newtheorem{theorem}{Theorem}
\newtheorem{lemma}[theorem]{Lemma}
\newtheorem{corollary}[theorem]{Corollary}
\newtheorem{proposition}[theorem]{Proposition}
\theoremstyle{remark}
\numberwithin{theorem}{section} \numberwithin{equation}{section}
\def\func#1{\mathop{\rm #1}}%
\begin{document}
\title[Asymptotic Distribution]{Asymptotic Distribution of the 
Zeros of recursively defined Non-Orthogonal Polynomials}
\author{Bernhard Heim }
\address{Lehrstuhl A f\"{u}r Mathematik, RWTH Aachen University, 52056 Aachen, Germany}
\email{bernhard.heim@rwth-aachen.de}
\author{Markus Neuhauser}
\address{Kutaisi International University, Youth Avenue, Turn 5/7 Kutaisi, 4600 Georgia}
\email{markus.neuhauser@kiu.edu.ge}
\address{Lehrstuhl A f\"{u}r Mathematik, RWTH Aachen University, 52056 Aachen, Germany}

\subjclass[2010] {Primary 11B37, 30C15 ; Secondary 26C10, 33C45}
\keywords{Moments, Polynomials, Recurrence, Zero Distribution}
\begin{abstract} We study the zero distribution of non-orthogonal polynomials attached to $g(n)=s(n)=n^2$:
\begin{equation*}
Q_n^g(x)= x \sum_{k=1}^n g(k) \, Q_{n-k}^g(x), \quad Q_0^g(x):=1.
\end{equation*}
It is known that the case $g=\func{id}$ involves Chebyshev polynomials of the second kind 
$Q_n^{\func{id}}(x)= x \, U_{n-1}(\frac{x}{2}+1)$.
The zeros of $Q_n^s(x)$ are real, simple,
and are located in $(-6\sqrt{3},0]$. Let $N_n(a,b)$
be the number of zeros between $-6 \sqrt{3} \leq a < b \leq 0$. Then we determine a density function $v(x)$, such
that 
\begin{equation*}
\lim_{n
\rightarrow \infty} \frac{N_n(a,b)}{n} = \int_a^b v(x) \,\, \mathrm{d}x.
\end{equation*}
The polynomials $Q_n^s(x)$ satisfy a four-term recursion.  
We present in detail an analysis of the fundamental roots and give an answer to an open question on
recent work
by Adams and
Tran--Zumba.
We extend a method proposed
by Freud for orthogonal polynomials to more general systems of polynomials. We 
determine the underlying moments and density function for the zero distribution.
\end{abstract}
\maketitle
\newpage
\section{Introduction and 
results}
We construct the density function for the asymptotic distribution of the zeros of
a recursively defined system of non-orthogonal polynomials. 
These recently discovered polynomials \cite{HNT20} are candidates for auxiliary functions
approximating more complicated polynomials, coding information
in complex analysis and number theory. 

Therefore, we extend a method offered by Freud (\cite{Fr71}, Chapter III) for orthogonal polynomials, where
the weight function is given and has compact support. We also refer to Szeg{\H{o}} \cite{Sz75} and Nevai \cite{Ne79}.
The polynomials satisfy a priori a hereditary recurrence equation of the Volterra type. In particular, 
the $n$th polynomial depends on all the previous ones (we refer to \cite{El05}, Section 6.3). 
Let $g(n)=s(n)=n^2$. Then
\begin{equation}\label{Qg}
Q_n^g(x)= x \sum_{k=1}^n g(k) \, Q_{n-k}^g(x), \quad Q_0^g(x):=1.
\end{equation}
It is known \cite{HNT20}, that the case $g(n)=\func{id}(n)=
n$ involves Chebyshev polynomials of the second kind. 
We start with a generalization of definition (\ref{Qg}), which contains several important
families of polynomials arising in analysis, combinatorics, number theory
and physics.
This underpins the need to know more about the zero
distribution of  
$\{Q_n^s(x)\}_n$. 

Let $g$ and $h$ be two normalized, non-vanishing arithmetic functions:
\[
P_n^{g,h}(x):=  \frac{x}{h(n)} \sum_{k=1}^n g(k) \, P_{n-k}^{g,h}(x), \text{ with } P_0^{g,h}(x):=1.
\]
Let $1(n)=1$ and let $L_n^{(\alpha)}(x)$ denote the
associated Laguerre polynomials for $\alpha>-1$. Then 
\begin{equation*}
P_n^{\func{id}, \func{id}}(x) = \frac{x}{n} \, L_{n-1}^{(1)}(-x)
\text{ and } Q_n^{\func{id}}\left( x\right) =P_n^{\func{id}, 1}(x) = x \, U_{n-1}\left(\frac{x}{2}+1\right). 
\end{equation*}

One of the most complicated, but also most interesting 
examples is related to $\sigma(n):= \sum_{ d\mid n} d$. 
The polynomials $P_n^{\sigma, \func{id}}(x)$ are the so-called
D'Arcais polynomials \cite{DA13}. They parametrize the Fourier coefficients of 
powers of the Dedekind eta function \cite{On03}. For example, 
$P_{n}^{\sigma, \func{id}}(-1)=p(n) $ are the partition numbers and 
$P_{n-1}^{\sigma, \func{id}}(-24)=\tau(n)$ the Ramanujan $\tau$-function. 
Both sequences play a fundamental role in combinatorics and number theory.
Lehmer's famous conjecture \cite{Le47, On08} states 
that $\tau(n)$ is never vanishing. Note that the
$P_n^{\sigma, \func{id}}(x+1)$
are the Nekrasov--Okounkov polynomials \cite{NO06}.
Further, let $h=1$ and $k$ be an even and positive integer. 
Let $g(n) = \sum_{d \mid n} d^{k-1}$ and $B_k$ be the $k$th Bernoulli number.
Then  $\left\{ P_{n}^{g,1}\left( \frac{2k}{B_k}\right) \right\} _{n}$
denote the coefficients of the reciprocals of Eisenstein series. 
These coefficients were first studied by Hardy and Ramanujan \cite{HR18}. Recently, there have been 
further studies in this area (see Berndt and Bialek \cite{BB05},
Bringmann and Kane \cite{BK17}, and \cite{HN21B}).

Although, at present, a direct approach is beyond our scope, it is obvious that properties of the polynomials, especially their zero distribution, are of fundamental interest. The ultimate goal is to prove the Lehmer conjecture.

Our strategy is the following. We have evidence that the zero distribution of
$P_n^{g, \func{id}}(x)$ and $P_{n}^{g,1}\left( x\right) $
are related. For example, let $h=1$ or $h=\func{id}$. Then
for $g=1,\func{id}, s$, or  $ \sigma$, there
exists a $\kappa_g>0$, such that $P_n^{g,h}\left( x\right) \neq 0$
for all $\vert x \vert > \kappa_g \, h(n-1)$ \cite{HNT20}.
As in the theory of orthogonal polynomials, polynomials with zeros located in a finite interval are easier to study,
so we first study polynomials with $h=1$. Then, since the
$\sigma \left( n\right) $ function is very complicated, we study polynomials
attached to the upper and lower bounds: $\func{id}(n) \leq \sigma(n) \leq s(n)$. Numerical experiments show that
it is very likely that we can choose $\kappa_{\func{id}} < \kappa_{\sigma} < \kappa_{s}$, which gives a first approximation where the zeros are located. Also, the coefficients of these polynomials are related.
Let $P_n^{g,h}(x) = \sum_{k=0}^n \, A_{n,k}^{g,h} \, x^k$. Let $h \in \{1, \func{id}\}$, then
\[
0< A_{n,k}^{\func{id}, h}  \leq   A_{n,k}^{\sigma, h}                 \leq A_{n,k}^{s, h}, \text{ for } 1 \leq k \leq n.
\]
This implies that it is very likely that properties of $Q_n^{\sigma}(x)$ can be finally deduced from
the {\it lower and upper bounds} $Q_n^{\func{id}}\left( x\right) $
and $Q_n^{s}(x)$. This paper is devoted to
the properties of $Q_{n}^{s}\left( x\right) $.

\subsection{Properties of the polynomials $Q_n^s(x)$.}

To state our main theorem,
we recall the following result
by M\l otkowski and Penson \cite{MP14}.
Let the sequence $\{L_m\}_m$
A091527 recorded in OEIS \cite{OEIS} be given:
\[
1,4,30,256,2310,21504,204204,1966080,19122246,\ldots ,
\]
defined by $L_0:=1$ and 
\[
L_{m}=4^{m}\binom{3m/2-1/2}{m}.
\]

\begin{theorem}[M\l otkowski, Penson \cite{MP14}] \label{dichte}
The sequence $\{L_m\}_m$ is positive definite in the sense of a moment sequence and has the
density function
\begin{equation}
v\left( x\right) =\frac{x^{4/3}+9\cdot 2^{4/3}\left( 1+\sqrt{1-x^{2}/108}\right) ^{4/3}}
{2^{8/3}\cdot 3^{5/2}\cdot \pi \cdot x^{2/3}\sqrt{1-x^2/108} \left( 1+\sqrt{1-x^2/108}\right) ^{2/3}}.
\label{eq:v}
\end{equation}
\end{theorem}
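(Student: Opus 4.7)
Since $v(x)$ is prescribed explicitly, my plan is to verify two things directly: (i) positivity of $v$ on its support, which follows by inspection of (\ref{eq:v})---every factor in both numerator and denominator is manifestly positive there---and (ii) the moment identity $\int x^m v(x)\,dx = L_m$ for every $m \geq 0$. Positive definiteness is then automatic, since any sequence arising as the moments of a positive, compactly supported measure is positive definite.

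The heart of the argument is step (ii). The nested radical and the fractional exponents in $v$ strongly suggest the substitution $x = 6\sqrt{3}\sin(3\theta)$, $\theta \in [0,\pi/6]$, which rationalizes $\sqrt{1-x^2/108} = \cos(3\theta)$ and, via the half-angle identities $1+\cos(3\theta) = 2\cos^2(3\theta/2)$ and $\sin(3\theta) = 2\sin(3\theta/2)\cos(3\theta/2)$, allows every cube root to be extracted cleanly. The numerical coincidence $(6\sqrt{3})^{4/3} = 9 \cdot 2^{4/3}$ makes the two summands in the numerator of $v$ combine, and a direct simplification collapses the density (together with the Jacobian $dx = 18\sqrt{3}\cos(3\theta)\,d\theta$) into
\[
v(x)\,dx \;=\; \frac{3}{2\pi}\bigl[\tan^{2/3}(3\theta/2) + \cot^{2/3}(3\theta/2)\bigr]\,d\theta.
\]

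A further substitution $\psi = 3\theta/2 \in [0,\pi/4]$ and the expansion $\sin^m(2\psi) = 2^m\sin^m\psi\cos^m\psi$ turn the $m$th moment into a sum of two integrals on $[0,\pi/4]$. The reflection $\psi \mapsto \pi/2-\psi$ interchanges them, so their sum equals a single Beta integral $\int_0^{\pi/2}\sin^{m+2/3}\psi\cos^{m-2/3}\psi\,d\psi = \tfrac{1}{2}\Gamma(m/2+5/6)\Gamma(m/2+1/6)/\Gamma(m+1)$. Finally, the Gauss triplication formula $\Gamma(3z) = (2\pi)^{-1}\,3^{3z-1/2}\,\Gamma(z)\Gamma(z+\tfrac{1}{3})\Gamma(z+\tfrac{2}{3})$, applied at $z = m/2+1/6$, bundles the three $\Gamma$-factors at $m/2+1/6,\,m/2+1/2,\,m/2+5/6$ into $\Gamma(3m/2+1/2)$, and the powers of $3$ line up so that the result is exactly $L_m = 4^m\Gamma(3m/2+1/2)/\bigl(\Gamma(m+1)\Gamma(m/2+1/2)\bigr)$.

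The hard part will be the first simplification: reducing $v(x)\,dx$ to the compact trigonometric form requires careful tracking of the fractional powers of $2$, $3$, and $6\sqrt{3}$ in both the density and the Jacobian. Once this bookkeeping is done, the remaining moment computation is a textbook Beta-function calculation paired with the classical multiplication identity for $\Gamma$. A more conceptual alternative would be to recognize $L_m$ as a Raney (Fuss--Catalan) number and invoke the general density theorem of M\l otkowski--Penson directly; the substitution approach above, however, is entirely self-contained.
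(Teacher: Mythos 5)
Your proof is correct, but you should be aware that the paper itself does not prove this statement at all: Theorem \ref{dichte} is imported verbatim from M\l otkowski--Penson \cite{MP14} and used as a black box (its only role is to supply (\ref{eq:freud9.6}) once the moments $M_m=L_m$ have been identified in Section 5). Your self-contained verification is therefore a genuinely different route, and I have checked that it works: the substitution $x=6\sqrt{3}\sin (3\theta )$ together with the identity $(6\sqrt{3})^{4/3}=9\cdot 2^{4/3}$ does collapse $v(x)\,\mathrm{d}x$ to $\frac{3}{2\pi }\bigl( \tan ^{2/3}(3\theta /2)+\cot ^{2/3}(3\theta /2)\bigr) \,\mathrm{d}\theta $; the reflection $\psi \mapsto \pi /2-\psi $ merges the two pieces into $\frac{4^{m}3^{3m/2}}{2\pi }\,\Gamma (m/2+5/6)\Gamma (m/2+1/6)/\Gamma (m+1)$ (using $(6\sqrt{3})^{m}2^{m}=4^{m}3^{3m/2}$), and the triplication formula at $z=m/2+1/6$ produces exactly $4^{m}\Gamma (3m/2+1/2)/\bigl( \Gamma (m+1)\Gamma (m/2+1/2)\bigr) =L_{m}$; the case $m=0$ gives $\Gamma (5/6)\Gamma (1/6)=2\pi $, so $v$ is indeed a probability density. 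Two points deserve an explicit sentence in a write-up: the integrals are improper at $x=0$ (where $v\sim x^{-2/3}$) and at $x=6\sqrt{3}$ (where $v\sim (1-x^{2}/108)^{-1/2}$) but convergent; and the clause ``has the density function'' tacitly uses that a compactly supported moment sequence is determinate, so $v$ is the \emph{unique} measure with these moments. The trade-off between the two approaches is clear: the paper's citation leans on the Mellin--Barnes/Fuss--Catalan machinery of \cite{MP14}, which explains \emph{why} such a density exists and how to find it, whereas your Beta-integral computation is elementary and independent of that machinery but requires the formula for $v$ to be guessed in advance.
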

In this paper we prove the following result. Let $Q_n(x)= Q_n^s(x)$ then:

\begin{theorem} \label{Main}
Let $0 \leq a \leq b \leq 6 \sqrt{3}$. We denote by $N_n(a,b)$ the number of zeros of $Q_n(-x)$ in $[a,b]$.
Then we have
\[
\lim _{n\rightarrow \infty }\frac{N_n
\left( a,b \right) }{n}
=\int _{a}^{b} v(x) \, \, \mathrm{d}x.
\]
\end{theorem}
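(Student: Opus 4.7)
The plan is to apply the method of moments. Let $\xi_1^{(n)},\ldots,\xi_n^{(n)}\in[0,6\sqrt{3}]$ denote the zeros of $Q_n(-y)$ and set $\mu_n:=\tfrac{1}{n}\sum_{i=1}^n\delta_{\xi_i^{(n)}}$. Since $L_m=\int_0^{6\sqrt{3}}y^m v(y)\,dy$ by Theorem \ref{dichte} and a compactly supported probability measure is determined by its moment sequence, it suffices to prove
\[
M_m^{(n)}:=\frac{1}{n}\sum_{i=1}^n\bigl(\xi_i^{(n)}\bigr)^m\;\to\;L_m
\]
for every $m\geq 0$. Weak convergence $\mu_n\to v(y)\,dy$ together with the continuity of $v$ on $(0,6\sqrt{3})$ then upgrades to the counting statement $N_n(a,b)/n\to\int_a^b v(y)\,dy$.

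The first step extracts the structure of the recurrence. Multiplying the defining hereditary relation by $t^n$, summing, and using $\sum_{k\geq 1}k^2 t^k=t(1+t)/(1-t)^3$ gives
\[
F(t,x):=\sum_{n\geq 0}Q_n(x)\,t^n=\frac{(1-t)^3}{p(t,x)},\qquad p(t,x):=1-(3+x)t+(3-x)t^2-t^3,
\]
equivalently the four-term recurrence $Q_n=(3+x)Q_{n-1}-(3-x)Q_{n-2}+Q_{n-3}$. Its characteristic polynomial $q(u,x):=u^3-(3+x)u^2+(3-x)u-1$ has discriminant $x^2(x^2-108)$, vanishing exactly at $\{0,\pm 6\sqrt{3}\}$. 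For $x\in(-6\sqrt{3},0)$, a perturbation analysis at $x=0$ shows that $q(\cdot,x)$ has one real root $\rho(x)\in(0,1)$ and a complex conjugate pair $u(x),\overline{u(x)}$ of modulus $\rho(x)^{-1/2}>1$. Writing $u(x)=\rho(x)^{-1/2}e^{i\theta(x)}$ with $\theta\in(0,\pi)$ and solving Vieta's relations yields the parametrization $y=-x=(a^2-4)^{3/2}/a$, $\cos\theta=(8-a^2)/(2a)$, where $a=\rho^{1/2}+\rho^{-1/2}\in[2,4]$; in particular $\tilde\theta(y):=\theta(-y)$ maps $[0,6\sqrt{3}]$ monotonically onto $[0,\pi]$.

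The asymptotics of $M_m^{(n)}$ then follow from Cauchy's formula
\[
M_m^{(n)}=\frac{(-1)^m}{2\pi i\,n}\oint_\Gamma x^m\,\frac{Q_n'(x)}{Q_n(x)}\,dx
\]
over a contour $\Gamma$ enclosing $[-6\sqrt{3},0]$. Partial fractions on $F(t,x)$ yield the closed form $Q_n(x)=A_\rho(x)\rho(x)^n+A_u(x)u(x)^n+\overline{A_u(x)}\,\overline{u(x)}^n$. By the Beraha--Kahane--Weiss dichotomy, $n^{-1}\log Q_n(x)\to\log u_{\max}(x)$ uniformly on compacts off the cut $[-6\sqrt{3},0]$, so $n^{-1}(Q_n'/Q_n)(x)\to(u_{\max}'/u_{\max})(x)$. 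Collapsing $\Gamma$ onto the cut and noting that the jump of $\log u_{\max}$ across it equals $\log u-\log\overline{u}=2i\theta$ produces
\[
\lim_{n\to\infty}M_m^{(n)}=\frac{1}{\pi}\int_0^{6\sqrt{3}}y^m\,\tilde\theta'(y)\,dy.
\]

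The final task identifies $\tilde\theta'(y)/\pi$ with $v(y)$. Direct differentiation of the parametrization gives $\tilde\theta'(y)=a(a^2+8)/[2(a^2+2)(a^2-4)\sqrt{16-a^2}]$; using the factorization $1-y^2/108=(12-u)(u+6)^2/[108(u+4)]$ with $u=a^2-4$, the substitution $\sigma=\sqrt{1-y^2/108}$ converts $\tilde\theta'(y)/\pi$ into the M\l otkowski--Penson form of Theorem \ref{dichte}, so $\lim_n M_m^{(n)}=L_m$. The main obstacle is the rigorous passage to the limit in the contour integral: uniform control of $n^{-1}(Q_n'/Q_n)$ as $\Gamma$ is brought to the cut must be secured, particularly near the endpoints $x\in\{0,-6\sqrt{3}\}$ where the three characteristic roots collide and the dominant-root picture degenerates. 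The final change-of-variable identification of the two density formulas is tedious but elementary.
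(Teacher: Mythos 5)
Your proposal is correct in outline but follows a genuinely different route from the paper for the central computation. The paper works \emph{outside} the support: it uses the $n$-th root asymptotics $\lim_n\sqrt[n]{(-1)^nQ_n(x)}=-\lambda_1(x)$ for real $x$ beyond $-6\sqrt{3}$ to show that the limiting moments $M_m$ are the coefficients in the expansion $\ln(\hat\lambda/x)=-\sum_m L_m x^{-m}/m$, computes these $L_m$ in closed form by Lagrange--B\"urmann inversion of the characteristic equation (obtaining $L_m=4^m\binom{3m/2-1/2}{m}$), cites M\l otkowski--Penson for the fact that this moment sequence has density $v$, and converts moment convergence into interval counting by Freud's polynomial sandwich $R\leq\chi\leq S$ via Stone--Weierstrass. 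You instead work \emph{on} the cut: a contour integral of $y^m Q_n'/Q_n$, dominant-root asymptotics \`a la Beraha--Kahane--Weiss, and the jump $2i\theta$ of $\log u_{\max}$ across the cut, yielding the limiting measure directly as $\tilde\theta'(y)\,\mathrm{d}y/\pi$, which you then identify with $v$ by an explicit change of variables; this bypasses both the Lagrange inversion and the appeal to M\l otkowski--Penson's moment identification, at the cost of a self-contained verification of the density formula. Your parametrization $y=(a^2-4)^{3/2}/a$, $\cos\theta=(8-a^2)/(2a)$ with $a=\rho^{1/2}+\rho^{-1/2}$ checks out against the paper's $x=(\lambda-1)^3/(\lambda^2+\lambda)$ and $|\lambda_1|=\lambda_3^{-1/2}$. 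One remark on the obstacle you flag: no uniform control of $n^{-1}Q_n'/Q_n$ near the cut is actually needed if you take the limits in the right order --- for a \emph{fixed} contour $\Gamma$ at positive distance from $[0,6\sqrt{3}]$ the quantity $M_m^{(n)}$ is independent of $\Gamma$, so you may first let $n\to\infty$ using locally uniform convergence on $\Gamma$ (which requires the non-vanishing of the coefficient $c_1$ of the dominant root, established in the paper's Section 3), and only afterwards collapse $\Gamma$ onto the cut in the limit expression involving only $\log u_{\max}$, whose boundary derivative has integrable singularities of order $y^{-2/3}$ and $(6\sqrt{3}-y)^{-1/2}$ at the endpoints. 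With that reordering your argument closes, and the final passage from weak convergence to $N_n(a,b)/n$ is the Portmanteau theorem (the limit measure has no atoms), playing the role of the paper's Freud sandwich.
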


One crucial step in the strategy to prove such kind of asymptotic distributions
outlined by Freud \cite{Fr71} is to determine the coefficients of the inverse of
a certain
power series. This we
accomplish by applying a formula by
Lagrange--B\"{u}rmann
(cf. Gessel and Henrici \cite{Ge16,He84}).

We remark that similar results had been obtained for
orthogonal polynomials in the bounded case (e.g. \cite{Sz75,Ne79,Ga87, VA90, MNV91}).
We recommend the excellent survey by Van Assche \cite{VA90}, which also 
reports on the unbounded case.
Note that Gawronski obtained results for the Jonqui{\`e}re polynomials, which
are non-orthogonal. But his method is different from the one presented in this paper.

In Figure \ref{verteilung} we have plotted the number of zeros of $Q_{1000}(-x)$ 
in a histogram and the density function $v(x)$.
For the histogram
we sliced the interval $[0, 6 \sqrt{3}]$ into
$100$ equal length subintervals. The zeros were computed using PARI/GP.
The histogram already very closely matches the density function.
%
\begin{figure}[H]
\includegraphics[width=.5\textwidth]{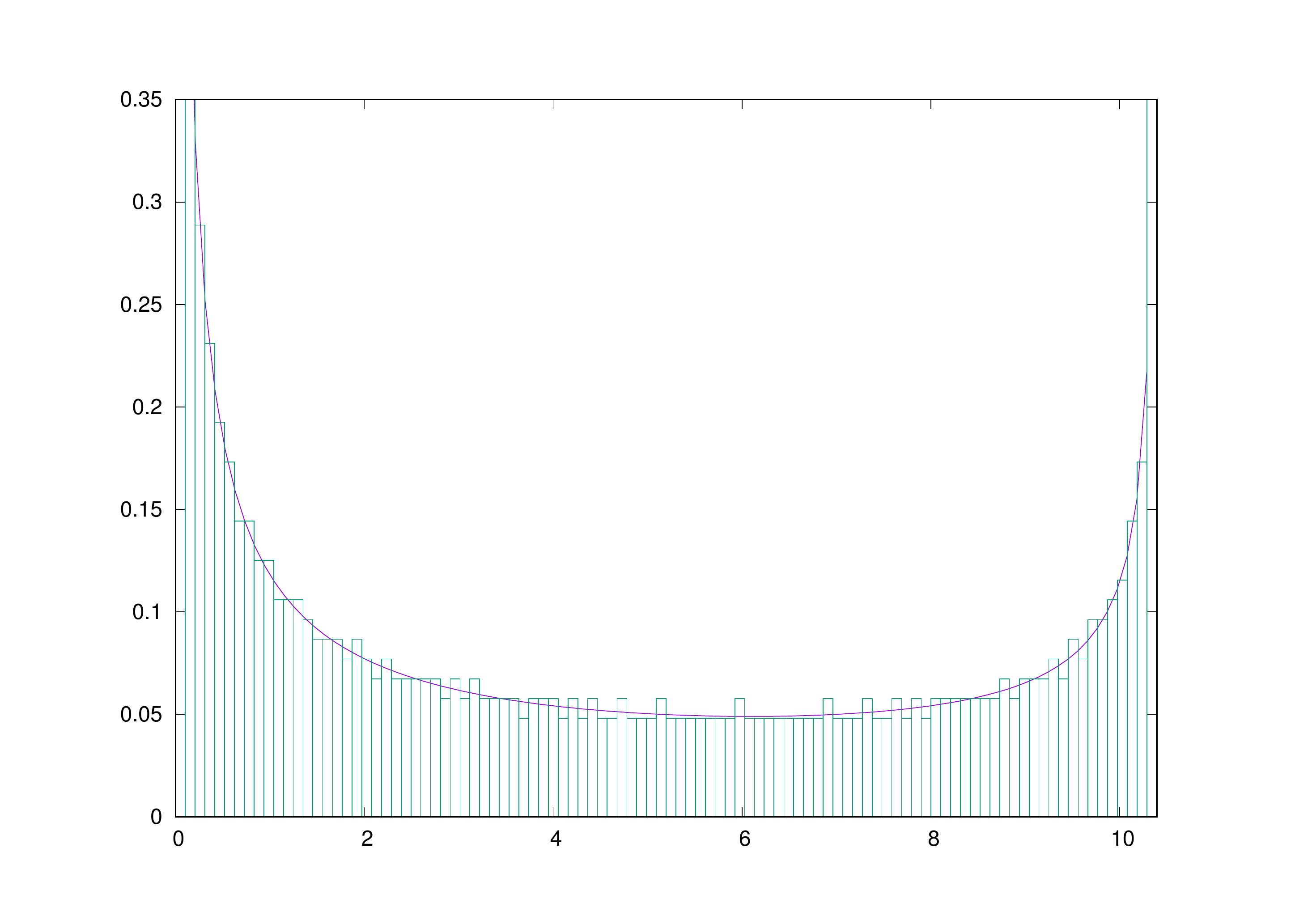}
\caption{\label{verteilung}Distribution of zeros of $Q_{1000}\left( -x\right) $ (bars) in comparison to $v$ (smooth curve).}
\end{figure}

Through integration we can find the cumulative distribution function of $v$.

\begin{theorem} \label{cumu}
The cumulative distribution function of $v$ in (\ref{eq:v}) is
\[
F\left( x\right) =\frac{1}{\pi }\left( 2\arctan \left( z\right) +\arctan \left( 2z-\sqrt{3}\right) +\arctan \left( 2z
+\sqrt{3}\right) \right)
\]
with $z=\sqrt[6]{
\frac{1-\sqrt{1-x^{2}/108}}{1+\sqrt{1-x^{2}/108}}
}$.
\end{theorem}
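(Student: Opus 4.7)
The plan is to verify the identity by differentiation rather than by evaluating the integral directly. First I would check $F(0)=0$: at $x=0$ one has $u:=\sqrt{1-x^{2}/108}=1$, hence $z=0$, and the arctan sum equals $2\arctan(0)+\arctan(-\sqrt{3})+\arctan(\sqrt{3})=0$. As a consistency check at the other endpoint, $x=6\sqrt{3}$ gives $u=0$, $z=1$, and $2\arctan(1)+\arctan(2-\sqrt{3})+\arctan(2+\sqrt{3})=\pi/2+\pi/12+5\pi/12=\pi$, so $F(6\sqrt{3})=1$, as required of a CDF.

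Applying the chain rule,
\[
F'(x)=\frac{1}{\pi}\left[\frac{2}{1+z^{2}}+\frac{2}{1+(2z-\sqrt{3})^{2}}+\frac{2}{1+(2z+\sqrt{3})^{2}}\right]\frac{dz}{dx}.
\]
The key algebraic simplification is to combine these three rational functions over the common denominator $(1+z^{2})(z^{4}-z^{2}+1)=z^{6}+1$; a short computation collapses the bracketed expression to $3(z^{4}+1)/(z^{6}+1)$.

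For $dz/dx$ I would invert the parametrization $z^{6}=(1-u)/(1+u)$. This gives $u=(1-z^{6})/(1+z^{6})$, hence $1-u^{2}=4z^{6}/(1+z^{6})^{2}$ and $x=12\sqrt{3}\,z^{3}/(1+z^{6})$ on the relevant branch. Differentiating yields $dx/dz=36\sqrt{3}\,z^{2}(1-z^{6})/(1+z^{6})^{2}$, and therefore
\[
F'(x)=\frac{(z^{4}+1)(1+z^{6})}{12\sqrt{3}\,\pi\,z^{2}(1-z^{6})}.
\]

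To finish, I would rewrite $v(x)$ in the same variable $z$. The identity $1+u=2/(1+z^{6})$ together with the formula for $x$ yields $x^{4/3}=9\cdot 2^{8/3}z^{4}/(1+z^{6})^{4/3}$ and $9\cdot 2^{4/3}(1+u)^{4/3}=9\cdot 2^{8/3}/(1+z^{6})^{4/3}$, so the numerator of \eqref{eq:v} becomes $9\cdot 2^{8/3}(z^{4}+1)/(1+z^{6})^{4/3}$; analogously $x^{2/3}=2^{4/3}\cdot 3\cdot z^{2}/(1+z^{6})^{2/3}$ and $(1+u)^{2/3}=2^{2/3}/(1+z^{6})^{2/3}$ reduce the denominator of $v(x)$ to $108\sqrt{3}\cdot 2^{8/3}\,\pi\, z^{2}(1-z^{6})/(1+z^{6})^{7/3}$, and the overall ratio matches $F'(x)$ exactly. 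The only real obstacle is keeping the fractional powers of $1+z^{6}$ straight; once the rational identities $1+u=2/(1+z^{6})$ and $x=12\sqrt{3}\,z^{3}/(1+z^{6})$ are in hand, the simplification is mechanical.
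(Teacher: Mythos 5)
Your proof is correct, and it runs the paper's computation in reverse. The paper derives $F$ by pushing the indefinite integral $\int v(x)\,\mathrm{d}x$ through the chain of substitutions $x=\sqrt{108(1-u^{2})}$, $u=(1-w)/(1+w)$, $w=z^{6}$ until it becomes $\frac{3}{\pi}\int\frac{1+z^{4}}{1+z^{6}}\,\mathrm{d}z$, and then confirms the arctan antiderivative by exactly the differentiation identity you use, namely $\frac{\mathrm{d}}{\mathrm{d}z}\left(2\arctan z+\arctan(2z-\sqrt{3})+\arctan(2z+\sqrt{3})\right)=3\frac{1+z^{4}}{1+z^{6}}$. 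You instead differentiate the claimed $F$ and verify $F'=v$ after inverting the parametrization to $x=12\sqrt{3}\,z^{3}/(1+z^{6})$ and $1+u=2/(1+z^{6})$; the underlying algebra is identical, so this is a mirror image rather than a new method. Two points where your version adds something: you work directly from the stated density (\ref{eq:v}), whereas the paper first passes through the M\l otkowski--Penson representation $v(x)=v_{3/2,-1/2}(x/4)/4$ and thus implicitly relies on that form agreeing with (\ref{eq:v}); and your endpoint checks $F(0)=0$ and $F(6\sqrt{3})=1$ fix the constant of integration and the normalization, which the paper leaves implicit since it only exhibits an antiderivative. Your intermediate formulas all check out, including $\mathrm{d}x/\mathrm{d}z=36\sqrt{3}\,z^{2}(1-z^{6})/(1+z^{6})^{2}$ and the reduction of numerator and denominator of $v$ to give $F'(x)=(1+z^{4})(1+z^{6})/\left(12\sqrt{3}\,\pi z^{2}(1-z^{6})\right)$.
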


The
cumulative distribution function
is plotted in
Figure~\ref{kumulativ}.
\begin{figure}[H]
\includegraphics[width=.5\textwidth]{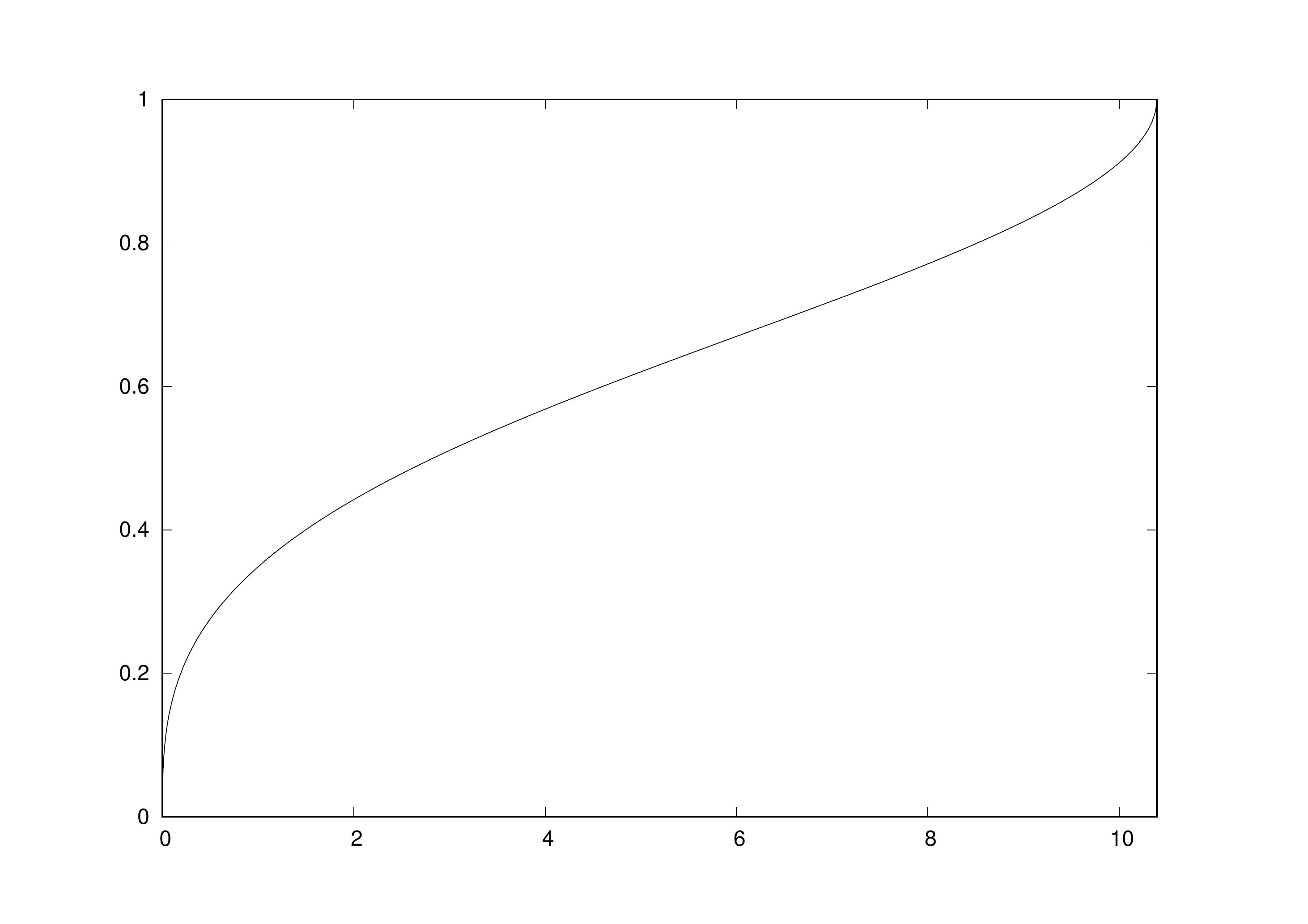}
\caption{\label{kumulativ}Cumulative distribution function $F$ of $v$.}
\end{figure}
The proof of Theorem \ref{Main} builds on the properties
that the zeros are real, simple, and located in $[0, 6 \sqrt{3}]$.
The sequence $\{Q_n(x)\}_n$ does not satisfy a three-term recurrence in the sense of Favard \cite{Ch11},
which implies that it is not orthogonal. To obtain these properties extra work is needed.
\begin{theorem}
\label{four}Let $Q_n(x):= P_n^{s,1}(x)$ for all $n \in \mathbb{N}_0$. Then $Q_n(x)$
is uniquely determined by $Q_0(x)=1$ and
the following $4$-term recursion,
\begin{equation}\label{eq:four}
Q_{n}\left( x\right) =\left( x+3\right) Q_{n-1}\left( x\right) +\left( x-3\right) Q_{n-2}\left( x\right) +Q_{n-3}\left( x\right) 
\text{ for } n \geq 4,
\end{equation}
with the initial conditions
$Q_1(x)=x$,
$Q_{2}\left( x\right) =
x^{2}+4
x$, and
$Q_{3}\left( x\right) =
x^{3}+8x^{2}+9x$.
Further, they do not constitute a sequence of
orthogonal polynomials.
\end{theorem}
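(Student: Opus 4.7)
The plan is to convert the hereditary Volterra recursion (\ref{Qg}) into a rational generating function and read off the short recurrence from its denominator. Let $F(t,x) := \sum_{n \geq 0} Q_n(x)\, t^n$. Multiplying (\ref{Qg}) with $g(k)=k^2$ by $t^n$, summing over $n \geq 1$, and using the standard identity $\sum_{k \geq 1} k^2 t^k = t(1+t)/(1-t)^3$, one obtains
\[
F(t,x) - 1 \;=\; F(t,x)\cdot \frac{x\,t(1+t)}{(1-t)^3},
\]
hence
\[
F(t,x) \;=\; \frac{(1-t)^3}{(1-t)^3 - x\,t(1+t)} \;=\; \frac{1 - 3t + 3t^2 - t^3}{1 - (x+3)t + (3-x)t^2 - t^3}.
\]

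Clearing the denominator yields $F(t,x)\bigl[1 - (x+3)t + (3-x)t^2 - t^3\bigr] = 1 - 3t + 3t^2 - t^3$, and equating coefficients of $t^n$ for $n \geq 4$ (where the right-hand side vanishes) immediately produces the recurrence (\ref{eq:four}). The initial values $Q_1 = x$, $Q_2 = x^2 + 4x$, $Q_3 = x^3 + 8x^2 + 9x$ are verified by direct substitution into (\ref{Qg}), and also match the coefficients of $t, t^2, t^3$ in the identity above. Uniqueness is automatic: (\ref{eq:four}) expresses $Q_n$ in terms of $Q_{n-1}, Q_{n-2}, Q_{n-3}$, so specifying $Q_0, Q_1, Q_2, Q_3$ pins the sequence down.

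For the non-orthogonality claim I would appeal to the converse direction of Favard's theorem: any monic polynomial sequence $\{p_n\}$ with $\deg p_n = n$ that is orthogonal with respect to some linear functional must satisfy a three-term recurrence of the form $p_n(x) = (x - c_n)\,p_{n-1}(x) - \lambda_n\, p_{n-2}(x)$. A one-line induction on (\ref{eq:four}) shows each $Q_n$ is monic, and one computes $Q_4(x) = x^4 + 12x^3 + 34x^2 + 16x$. Requiring $Q_4 = (x - c_4)Q_3 - \lambda_4 Q_2$, the coefficients of $x^3$ and $x^2$ force $c_4 = -4$ and $\lambda_4 = 7$; but then the coefficient of $x$ on the right-hand side evaluates to $-9c_4 - 4\lambda_4 = 8$, while the actual coefficient in $Q_4$ is $16$. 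This contradiction rules out any such three-term recurrence, so $\{Q_n\}$ is not an orthogonal polynomial sequence. No single step is a real obstacle; the bulk of the work lies in the generating-function algebra and the concrete computation for $Q_4$.
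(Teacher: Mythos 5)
Your proof is correct, but it takes a genuinely different and more self-contained route than the paper. For the recurrence itself, the paper simply cites \cite{HNT20}; you instead derive it from scratch by summing the Volterra recursion (\ref{Qg}) into the rational generating function $F(t,x)=(1-t)^3/\bigl((1-t)^3-xt(1+t)\bigr)$ and reading (\ref{eq:four}) off the denominator --- a clean argument that also explains why the recursion only starts at $n\geq 4$ (the numerator $1-3t+3t^2-t^3$ still contributes at $t^3$). For non-orthogonality, the paper proves a general criterion: using the top four coefficients $A_{n,n-j}^{g,1}$ it shows that $\{P_n^{g,1}\}$ cannot satisfy a Favard-type three-term recurrence unless $(g(2))^3-2g(2)g(3)+g(4)=0$, and then specializes to $g=s$ (where $64-72+16=8\neq 0$). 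You instead verify the obstruction concretely at a single index: computing $Q_4(x)=x^4+12x^3+34x^2+16x$ and showing that no choice of $c_4,\lambda_4$ makes $Q_4=(x-c_4)Q_3-\lambda_4Q_2$ (the $x^3$ and $x^2$ coefficients force $c_4=-4$, $\lambda_4=7$, but then the $x$-coefficient is $8\neq 16$); I checked these numbers and they are right. Your version is more elementary and fully verifiable by hand, while the paper's version yields a reusable criterion for the whole family $P_n^{g,1}$ and exhibits the obstruction as an $n$-independent quantity rather than a single-instance failure. Both correctly invoke the necessary direction of Favard's theorem (orthogonality forces a three-term recurrence for the monic sequence), so the logical structure of the conclusion is the same.
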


Nevertheless, with a detailed analysis of the characteristic polynomial of (\ref{eq:four}) and the
classification of the fundamental solutions depending on $x$, we obtain:
\begin{theorem}
\label{reellenullstellen} 
Let $n$ be a positive integer. The polynomial $Q_n(x)$ is hyperbolic and the zeros are simple.
The zeros are located in the half-open interval $(-6\sqrt{3}, 0]$.
\end{theorem}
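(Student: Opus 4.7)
The plan is to study the characteristic polynomial
$$p(t)=t^3-(x+3)t^2-(x-3)t-1$$
of the $4$-term recurrence (\ref{eq:four}), derive an explicit parametrised closed form for $Q_n(x)$, and then count its zeros by an oscillation argument. A direct computation yields
$$\Delta(x)=x^2\bigl(x^2-108\bigr),$$
so for $x\in(-6\sqrt{3},0)$ the polynomial $p$ has one simple real root and a pair of complex conjugate roots. Since Vieta gives $\alpha_1\alpha_2\alpha_3=1$, I parametrise
$$\alpha_1=R^{-2},\qquad \alpha_{2,3}=Re^{\mp i\phi},\qquad R>1,\ \phi\in(0,\pi),$$
with $R$ and $\phi$ determined by the remaining Vieta relations $1/R^2+2R\cos\phi=x+3$ and $R^2+2\cos\phi/R=3-x$. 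A short argument then shows that $\phi\mapsto x(\phi)$ is a strict decreasing homeomorphism $(0,\pi)\to(-6\sqrt{3},0)$, with $x(0^+)=0$, $x(\pi^-)=-6\sqrt{3}$, and $x(\phi)=-\tfrac{4}{3\sqrt{3}}\phi^3+O(\phi^5)$ near the origin.

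From the generating-function identity
$$\sum_{n\ge 0}Q_n(x)T^n=\frac{(1-T)^3}{(R^2-T)\bigl(T^2-2T\cos\phi/R+R^{-2}\bigr)},$$
a partial-fraction decomposition together with the Chebyshev expansion $U_k(\cos\phi)=\sin((k+1)\phi)/\sin\phi$ gives, for every $n\ge 1$,
$$Q_n(x(\phi))=\mathcal{E}_n(\phi)+\mathcal{M}(\phi)\,R(\phi)^{n+1}\,\sin\bigl(n\phi+\delta(\phi)\bigr),$$
where $\mathcal{M},\delta$ are smooth functions of $\phi\in(0,\pi)$ built from the partial-fraction coefficients, $\mathcal{M}>0$, and $|\mathcal{E}_n(\phi)|=O\bigl(R(\phi)^{-2n}\bigr)$ uniformly on compact sub-intervals. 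Since $R(\phi)>1$ on $(0,\pi)$, the oscillating term exponentially dominates the remainder away from the endpoints.

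To obtain the zero count, I first rule out zeros outside $(-6\sqrt{3},0]$: the non-negativity of the coefficients of $Q_n$, immediate by induction from (\ref{Qg}), excludes positive zeros, and $Q_n(0)=0$ supplies the zero at the origin. At $x=-6\sqrt{3}$ the characteristic polynomial has the double root $a=-(2+\sqrt{3})$ of modulus $>1$ and the simple root $b=7-4\sqrt{3}$ of modulus $<1$; the general recurrence solution $(A+Bn)a^n+Cb^n$ satisfies $B\ne 0$ (checked by solving the $3\times 3$ initial-condition system), so $Bna^n$ eventually dominates and $Q_n(-6\sqrt{3})\ne 0$ for all large $n$, with small $n$ handled by direct computation. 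For $x<-6\sqrt{3}$ the three characteristic roots are real, distinct and negative, and a sign analysis of the weights $(1+\alpha_i)/\prod_{j\ne i}(\alpha_j-\alpha_i)$ in the closed form $Q_n(x)=x\sum_i(1+\alpha_i)\alpha_i^n/\prod_{j\ne i}(\alpha_j-\alpha_i)$ rules out cancellation. On the parametrised interval $\phi\in(0,\pi)$, the phase $n\phi+\delta(\phi)$ is strictly increasing and sweeps an interval of length $\approx n\pi$; applying the intermediate value theorem on sub-intervals where $\mathcal{E}_n$ is negligible produces a simple zero of $Q_n(x(\phi))$ near each half-period of the sinusoid, yielding exactly $n-1$ zeros in $(-6\sqrt{3},0)$ and hence $n$ zeros in $(-6\sqrt{3},0]$ counting the origin.

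The main obstacle is the endpoint bookkeeping where $R(\phi)\to 1$ and the estimate $|\mathcal{E}_n|=O(R^{-2n})$ degenerates. Near $\phi=0$ the expansion $R(\phi)=1+\phi/\sqrt{3}+O(\phi^2)$ together with local Taylor expansions of both $\mathcal{E}_n$ and the sinusoid show that on $(0,c/n)$, for a suitable $c>0$, the would-be sign change of the sinusoid is absorbed into the already-counted zero at $x=0$, leaving no additional zero there. Near $\phi=\pi$, the double-root bifurcation at $x=-6\sqrt{3}$ controlled by the analysis above guarantees that the last sign change of the sinusoid lies strictly inside $(0,\pi)$, so no phantom zero appears at the boundary. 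Together these confirm Theorem \ref{reellenullstellen}: $Q_n(x)$ has exactly $n$ simple real zeros, all located in $(-6\sqrt{3},0]$.
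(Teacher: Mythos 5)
Your strategy is essentially the paper's: parametrize $(-6\sqrt{3},0)$ by the characteristic roots of (\ref{eq:charn2}), write $Q_n$ as a dominant oscillating term plus a contribution from the real root, and count sign changes. Your setup is correct and matches the paper's with $\lambda_3=R^{-2}$, $r=|\lambda_1|=1/\sqrt{\lambda_3}=R$, and $Q_n(x)=2r^ns\cos(\omega+n\vartheta)+c_3\lambda_3^n$; your generating function $(1-T)^3/\prod_i(1-\alpha_iT)$ checks out against the initial conditions.

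The genuine gap is in the domination step. You justify $|\mathcal{E}_n|\ll\mathcal{M}R^{n+1}$ only by ``$R>1$, so the oscillating term exponentially dominates away from the endpoints,'' and then patch the endpoints with assertions (``the would-be sign change is absorbed into the already-counted zero at $x=0$,'' ``the bifurcation guarantees the last sign change lies strictly inside''). These are not proofs, and the endpoints are exactly where the difficulty sits: the first and last extrema of the sinusoid occur at $\phi=O(1/n)$ and $\pi-\phi=O(1/n)$, where $R=1+O(1/n)$, so $R^{n+1}$ and $R^{-2n}$ are both $O(1)$ and there is no exponential separation; the comparison then depends entirely on the constants, which you never compare. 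Losing even one sign change there is fatal, because hyperbolicity requires accounting for all $n$ zeros exactly (an $o(n)$ loss would be harmless for Theorem \ref{Main}, but not here). What closes this in the paper is the pointwise inequality $|c_1|>|c_3|$ of (\ref{eq:c1groesserc3}), proved via the explicit identity $|c_1|^2-|c_3|^2=(\lambda_3-1)^6/\bigl((-\lambda_3^2+14\lambda_3-1)(\lambda_3^2+4\lambda_3+1)^2\bigr)>0$: combined with the trivial $r>\lambda_3$ it gives $2sr^n>|c_3|\lambda_3^n$ for every $n\ge1$ and every point of the open interval, so the sign of $Q_n$ at each parameter value where $\cos(\omega+n\vartheta)=(-1)^k$ is exactly $(-1)^k$, with no asymptotics and no endpoint casework. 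You need this (or an equivalent uniform comparison of your amplitude $\mathcal{M}$ with the coefficient of $R^{-2n}$ in $\mathcal{E}_n$) for your count to be valid. Two smaller points: your phase sweep ``$\approx n\pi$'' must be sharpened to precise endpoint limits (the paper shows $\omega+n\vartheta<\pi/2$ as $x\to0$ and $>n\pi$ as $x\to-6\sqrt{3}$, hence exactly $n$ alternation points and $n-1$ sign changes); and once you have $n-1$ sign changes in $(-6\sqrt{3},0)$ plus the zero at $0$, the degree of $Q_n$ already forces all zeros to be simple and to lie in $(-6\sqrt{3},0]$, so your separate exclusion of zeros at $x\le-6\sqrt{3}$ and $x>0$ is unnecessary.
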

\begin{corollary}
The polynomials $Q_n(x)$ are log-concave.
\end{corollary}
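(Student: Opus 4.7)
The plan is to derive log-concavity as an immediate consequence of Theorem \ref{reellenullstellen} via the classical Newton inequalities. By Theorem \ref{reellenullstellen}, all zeros of $Q_n(x)$ are real, simple, and located in $(-6\sqrt{3},0]$; in particular they are non-positive. Writing $Q_n(x)=\prod_{i=1}^{n}\left(x-r_i\right)$ with each $r_i\leq 0$, we have $Q_n(x)=\prod_{i=1}^{n}\left(x+|r_i|\right)$, so every coefficient $a_{n,k}$ of $Q_n(x)=\sum_{k=0}^{n} a_{n,k}\, x^k$ is non-negative (this is already known from the defining recursion, but it drops out for free here).

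Next I would invoke Newton's inequalities: for any polynomial $\sum_{k=0}^n a_k x^k$ of degree $n$ with only real roots, one has
\[
a_k^2 \geq a_{k-1}\,a_{k+1}\,\frac{\left(k+1\right)\left(n-k+1\right)}{k\left(n-k\right)}, \qquad 1\leq k\leq n-1.
\]
The standard derivation is by induction on $n$, using the fact that the derivative of a real-rooted polynomial is again real-rooted (Rolle's theorem) together with the observation that the inequalities for $\sum a_k x^k$ of degree $n$ correspond, after differentiation and appropriate index shifts, to the inequalities for a polynomial of degree $n-1$. Since $(k+1)(n-k+1)/\left(k(n-k)\right)\geq 1$, one immediately obtains
\[
a_{n,k}^2 \geq a_{n,k-1}\,a_{n,k+1}, \qquad 1\leq k\leq n-1,
\]
which is the log-concavity statement.

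There is essentially no obstacle: all the content lies in Theorem \ref{reellenullstellen} (hyperbolicity of $Q_n$), and the corollary is a textbook application of Newton's inequalities. The only minor point to make explicit is that non-positivity of the zeros guarantees non-negativity of the coefficients, so that the inequality $a_{n,k}^2\geq a_{n,k-1}\,a_{n,k+1}$ genuinely expresses log-concavity (no sign ambiguities appear on either side).
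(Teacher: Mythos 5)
Your proof is correct and follows the route the paper clearly intends (the paper states this corollary without proof, immediately after Theorem \ref{reellenullstellen}): hyperbolicity plus non-positive zeros gives non-negative coefficients, and Newton's inequalities then yield $a_{n,k}^2\geq a_{n,k-1}a_{n,k+1}$. Your statement of Newton's inequalities and the observation that the binomial factor is at least $1$ are both accurate, so nothing is missing.
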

Thus, $Q_{n}^{\func{id}}\left( x\right) $ and $Q_{n}^{s}\left( x\right) $
are unimodal. It would be interesting to analyze the position of the modes,
with the growth of the coefficients of $Q_{n}^{\sigma }\left( x\right) $.

Following Laguerre's theorem \cite{La83} and an inversion
theorem \cite{HN21A} relating the coefficients
of $P_{n}^{g, 1}\left( x\right) $ and $P_n^{\tilde{g}, \func{id}}(x)$, where $\tilde{g}(n):= n \, g(n)$, we obtain:

\begin{corollary}
\label{La} Let $g(n)=n^3$. Then the polynomials $P_n^{g,\func{id}}(x)$ are hyperbolic.
\end{corollary}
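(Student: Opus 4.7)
The plan is to combine Theorem \ref{reellenullstellen} with the two ingredients named in the statement. Observe first that the given arithmetic function satisfies $g(n) = n^3 = n \cdot s(n) = \tilde s(n)$, so the polynomial in question is $P_n^{g, \func{id}} = P_n^{\tilde s, \func{id}}$. By Theorem \ref{reellenullstellen}, $Q_n(x) = P_n^{s,1}(x)$ is hyperbolic with all zeros in $(-6\sqrt{3}, 0]$, so it suffices to transfer hyperbolicity from $P_n^{s,1}$ to $P_n^{\tilde s, \func{id}}$ along the inversion.

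For the inversion, I would pass to generating functions. Writing $S(t) := \sum_{m \geq 1} m^2 \, t^m$, summing the $h = 1$ recursion for $g = s$ yields $\sum_n P_n^{s,1}(x) \, t^n = 1/(1 - x \, S(t))$, while the $h = \func{id}$ recursion for $g = \tilde s$, after multiplying through by $n$, is equivalent to the ODE $t \, \partial_t H = x \, S(t) \, H$ with $H(x, 0) = 1$, hence $\sum_n P_n^{\tilde s, \func{id}}(x) \, t^n = \exp(x \, S(t))$. Expanding each in powers of $x$ and reading off the coefficient of $t^n$ gives
\[
A_{n,k}^{\tilde s, \func{id}} \; = \; \frac{1}{k!} \, A_{n,k}^{s,1}, \qquad 0 \leq k \leq n,
\]
the special case of the inversion theorem of \cite{HN21A} needed here. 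Note that the leading coefficient $A_{n,n}^{s,1} = s(1)^n = 1$ maps to $1/n! \neq 0$, so degree is preserved.

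Laguerre's theorem \cite{La83} then closes the argument: $\{1/k!\}_{k \geq 0}$ is a multiplier sequence, i.e.\ the Borel-type transform $\sum a_k x^k \mapsto \sum (a_k/k!) \, x^k$ sends hyperbolic polynomials to hyperbolic polynomials. Applied to $P_n^{s,1}$, this yields the hyperbolicity of $P_n^{\tilde s, \func{id}} = P_n^{g, \func{id}}$. The main obstacle I anticipate sits at the interface of the two cited results: one must match the factorial weights produced by the inversion of \cite{HN21A} with the precise form of Laguerre's theorem being invoked, and verify that the one-sided location of the zeros supplied by Theorem \ref{reellenullstellen} fits the multiplier sequence hypothesis (a zero at $x=0$ is preserved since $a_0=0$ forces $a_0/0!=0$, so nothing is lost there). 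Once those compatibilities are checked, no further computation is required.
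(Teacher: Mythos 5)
Your proof follows the paper's argument exactly: Theorem \ref{reellenullstellen} for the hyperbolicity of $P_n^{s,1}$, the coefficient relation $A_{n,k}^{g,\func{id}}=\frac{1}{k!}A_{n,k}^{s,1}$ (which the paper simply cites from \cite{HN21A} and you re-derive via generating functions), and Laguerre's theorem \cite{La83} that the multiplier sequence $\{1/k!\}$ preserves real-rootedness. One small slip: with your $S(t)=\sum_{m\geq 1}m^2t^m$, multiplying the $h=\func{id}$ recursion for $\tilde s(n)=n^3$ by $n$ gives $t\,\partial_tH=x\bigl(\sum_m m^3t^m\bigr)H=x\,tS'(t)H$ rather than $t\,\partial_tH=xS(t)H$ (the latter would yield $\exp\bigl(x\sum_m mt^m\bigr)$), but the corrected ODE does produce your stated $H=\exp(xS(t))$, so the key identity $A_{n,k}^{\tilde s,\func{id}}=\frac{1}{k!}[t^n]S(t)^k=\frac{1}{k!}A_{n,k}^{s,1}$ and the rest of the argument stand.
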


From the proof of Theorem \ref{reellenullstellen} and some extra effort, we obtain:
\begin{corollary}
\label{verschraenkt}
The set of all zeros is  dense in $[-6 \sqrt{3},0]$.
\end{corollary}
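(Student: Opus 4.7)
The plan is to derive the density statement directly from the asymptotic-distribution result of Theorem \ref{Main}. The key observation is that the density function $v(x)$ defined in (\ref{eq:v}) is strictly positive on the open interval $(0,6\sqrt{3})$. Indeed, for $x \in (0,6\sqrt{3})$ one has $1 - x^{2}/108 \in (0,1)$, so both $\sqrt{1-x^{2}/108}$ and $1+\sqrt{1-x^{2}/108}$ are positive; together with $x^{4/3}, x^{2/3}>0$, this shows every factor in the numerator and denominator of $v(x)$ is positive, hence $v(x)>0$.

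With positivity of $v$ in hand, the density of zeros becomes a routine consequence of Theorem \ref{Main}. Given any subinterval $[a,b]\subset (0,6\sqrt{3})$ with $a<b$, strict positivity gives $\int_{a}^{b} v(x)\,\mathrm{d}x > 0$. Theorem \ref{Main} then yields
\[
\frac{N_{n}(a,b)}{n}\;\longrightarrow\;\int_{a}^{b} v(x)\,\mathrm{d}x \;>\; 0,
\]
so $N_{n}(a,b)\to\infty$. In particular, for all sufficiently large $n$ the polynomial $Q_{n}$ has at least one zero in $[-b,-a]$. Since every open subinterval of the interior of $[-6\sqrt{3},0]$ arises this way, the union $\bigcup_{n\geq 1}\{\text{zeros of }Q_{n}\}$ is dense in $(-6\sqrt{3},0)$.

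It remains to extend this to the two endpoints. The point $0$ is itself a zero of every $Q_{n}$ for $n\geq 1$: the defining recursion (\ref{Qg}) shows that $x$ divides $Q_{n}(x)$, so $0 \in \bigcup_{n\geq 1}\{\text{zeros of }Q_{n}\}$ and in particular lies in the closure. For the left endpoint $-6\sqrt{3}$, I would apply the preceding argument to shrinking intervals of the form $[6\sqrt{3}-\varepsilon,\,6\sqrt{3}-\varepsilon/2]$ with $\varepsilon>0$ small: since $v$ remains strictly positive there, the integral $\int_{6\sqrt{3}-\varepsilon}^{6\sqrt{3}-\varepsilon/2} v(x)\,\mathrm{d}x$ is positive, so Theorem \ref{Main} furnishes zeros of $Q_{n}$ in $[-(6\sqrt{3}-\varepsilon/2),-(6\sqrt{3}-\varepsilon)]$ for all large $n$. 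Letting $\varepsilon\to 0$ produces zeros accumulating at $-6\sqrt{3}$, completing the density claim on the closed interval.

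No serious obstacle arises: the corollary is essentially a distillation of Theorem \ref{Main}, and the only real content is the pointwise positivity of $v$ on the interior (immediate from the explicit formula) together with the separate handling of the two endpoints. This is presumably the "extra effort" alluded to in the statement.
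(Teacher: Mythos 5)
Your proof is correct, but it takes a genuinely different route from the paper. The paper does not invoke Theorem \ref{Main} at all: it proves density directly from the machinery of the proof of Theorem \ref{reellenullstellen}, namely the strictly monotone, surjective angle functions $\vartheta(\lambda_3)$ and $\omega(\lambda_3)$. There, each zero $\bar{x}_{n,k}$ is trapped between consecutive sign-change parameters satisfying $\omega_{n,k}+n\vartheta_{n,k}=k\pi$, and one shows $\bar{x}_{n,n}\to -6\sqrt{3}$, $\bar{x}_{n,2}\to 0$, and that for any interior $x$ a suitable subsequence $\bar{x}_{n,k_n}\to x$; this is elementary and self-contained, and it additionally pins down \emph{which} zeros converge where. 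Your argument instead deduces density as a soft corollary of Theorem \ref{Main} together with the (correct and easily checked) strict positivity of $v$ on $(0,6\sqrt{3})$: if $\int_a^b v>0$ then $N_n(a,b)\to\infty$, so every open subinterval eventually contains zeros. This is logically sound and not circular, since the paper's proof of Theorem \ref{Main} nowhere uses the corollary; but it is much heavier, resting on the M{\l}otkowski--Penson moment/density theorem and the Freud-style sandwich argument, and it inverts the paper's order of exposition (the corollary is proved in Section 4, Theorem \ref{Main} only in Section 5). Two small remarks: your separate treatment of the endpoints is redundant, since density in the open interval already forces the closure to contain $[-6\sqrt{3},0]$ (and $0$ is trivially a zero of every $Q_n$); and the ``extra effort'' mentioned in the paper refers to the angle-bracketing argument, not to endpoint bookkeeping.
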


\subsection{Recent work related to $4$-term recursion}
The hyperbolicity of polynomials satisfying a $3$-term recursion is well understood.
Important examples are Hermite, Laguerre, and Chebyshev polynomials. All are orthogonal.
Little is known about $4$-term recurrences, although Adams
and Tran--Zumba
\cite{Ad20, TZ18, TZ20} made some progress.
They studied sequences of polynomials $\{P_m(x)\}_m$ satisfying
\[
P_m(x) + C(x) \, P_{m-1}(x) + B(x) \, P_{m-2}(x) + A(x) \, P_{m-3}(x) = 0,
\]
where the coefficients $A(x), B(x), C(x)$ are certain linear polynomials in 
$x$. They provided necessary and sufficient conditions for the hyperbolicity of $P_m(x)$. In some cases they also proved
density results. In \cite{Ad20, TZ20} section 5 and section 6, they proposed the problem of determining the  $A(x), B(x), C(x)$, where the polynomials $P_m(x)$ are hyperbolic. The polynomials $Q_n^s(x)$ studied in this paper give a partial
answer to the problem proposed by
Adams
and Tran--Zumba.

\section{Non-orthogonality of $\{Q_n(x)\}_n$}
In this paper we prove that $\{Q_n(x)\}_n$ has real and simple zeros.
It is well known that orthogonal polynomials have this property. According to Favard's theorem
a system of orthogonal polynomials is characterized by three-term recursion, with certain obstructions on
the coefficients. In this section we prove  that $\{Q_n(x)\}_n$ does  not satisfy a three-term recursion
with Favard's restrictions.
Thus, they are not orthogonal and the hyperbolicity has to be proven differently.
We prove a more general theorem on non-orthogonality of $\{P_n^{g,1}(x)\}_n$ and apply it to $g(n)= s(n)=n^2$.

\begin{theorem}
The
$P_{n}^{g,1}\left( x\right) $
do
not constitute
a sequence of orthogonal polynomials
if
$\left( g\left( 2\right) \right) ^{3}-2g\left( 2\right) g\left( 3\right) +g\left( 4\right) \neq 0$.
\end{theorem}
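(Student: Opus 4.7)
My approach is via the classical three-term recurrence associated with any orthogonal polynomial sequence (OPS): if $\{P_n^{g,1}\}$ is an OPS of monic polynomials, then (by the standard derivation; see, e.g., Chihara \cite{Ch11}) it must satisfy
\[
P_n(x) = (x - B_n)\, P_{n-1}(x) - C_n\, P_{n-2}(x), \qquad n \geq 2,
\]
with $B_n, C_n$ real constants. The plan is to compute $P_0, \ldots, P_4$ directly from the hereditary recursion, try to match this recurrence shape, and observe that the quantity $\left(g(2)\right)^3 - 2\, g(2)\, g(3) + g(4)$ appears precisely as the compatibility condition forced at $n = 4$. Its non-vanishing will then rule out such a recurrence and hence orthogonality.

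Using the normalization $g(1) = 1$, unwinding the recursion gives
\[
P_1(x) = x, \qquad P_2(x) = x^2 + g(2)\, x, \qquad P_3(x) = x^3 + 2\, g(2)\, x^2 + g(3)\, x,
\]
\[
P_4(x) = x^4 + 3\, g(2)\, x^3 + \left( g(2)^2 + 2\, g(3)\right) x^2 + g(4)\, x.
\]
Matching coefficients in the hypothetical recurrence, the case $n = 2$ fixes $B_2 = -g(2)$; the case $n = 3$ forces $B_3 = -g(2)$ and $C_3 = g(2)^2 - g(3)$; at $n = 4$ the $x^3$ and $x^2$ coefficients determine $B_4 = -g(2)$ and $C_4 = g(2)^2 - g(3)$.

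The decisive step is matching the $x^1$ coefficient at $n = 4$, which yields
\[
-g(3)\, B_4 - g(2)\, C_4 = g(4).
\]
Substituting the values of $B_4$ and $C_4$ collapses this to $g(2)^3 - 2\, g(2)\, g(3) + g(4) = 0$. Under the hypothesis that this cubic expression is nonzero the equation is violated, so no three-term recurrence of the required shape can produce $P_4$; consequently $\{P_n^{g,1}\}$ cannot be an OPS.

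The argument is essentially bookkeeping, so there is no real mathematical obstacle. The only item requiring care is to invoke the recurrence in the correct shape — linear in $x$ for the $P_{n-1}$ coefficient and constant in $x$ for the $P_{n-2}$ coefficient — so that the conclusion genuinely rules out orthogonality and not merely a particular form of recurrence. It is perhaps mildly surprising that the obstruction takes the clean cubic form $g(2)^3 - 2\, g(2)\, g(3) + g(4)$, but once the computations above are carried out this is forced.
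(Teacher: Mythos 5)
Your proposal is correct and takes essentially the same approach as the paper: both force the unique candidate monic three-term recurrence coefficients ($B_n=-g(2)$, $C_n=\left(g(2)\right)^2-g(3)$) by matching the top coefficients and then exhibit $\left(g(2)\right)^3-2g(2)g(3)+g(4)$ as the residual obstruction. The only difference is that the paper performs the match at the $x^{n-2}$ coefficient for general $n$ using closed formulas for $A^{g,1}_{n,n-j}$, whereas you verify the obstruction concretely at $n=4$; either suffices.
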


\begin{proof}
We can obtain the highest coefficients of $P_{n}^{g,1}\left( x\right) $ from
\cite{HN20}
(Theorem~1) as follows:
\begin{eqnarray*}
A_{n,n}^{g,1}&=&1,\qquad n\geq 0,\\
A_{n,n-1}^{g,1}&=&g\left( 2\right) \left( n-1\right) ,\qquad n\geq 1,\\
A_{n,n-2}^{g,1}&=&\left( g\left( 2\right) \right) ^{2}\binom{n-2}{2}+g\left( 3\right) \left( n-2\right) ,\qquad n\geq 2,\\
A_{n,n-3}^{g,1}&=&\left( g\left( 2\right) \right) ^{3}\binom{n-3}{3}+2g\left( 2\right) g\left( 3\right) \binom{n-3}{2}+g\left( 4\right) \left( n-3\right) ,\qquad n\geq 3.
\end{eqnarray*}
The binomial coefficients $\binom{n}{k}$ are $0$ if
$n<k$.

We obtain
\[
P_{n+1}^{g,1}\left( x\right) -x P_{n}^{g,1}\left( x\right) =\sum _{k=1}^{n}\left( A_{n+1,k}^{g,1}-A_{n,k-1}^{g,1}\right) x^{k}
\]
where the leading
coefficients have canceled. For $k=n,n-1,n-2$ we obtain
\begin{eqnarray}
A_{n+1,n}^{g,1}-A_{n,n-1}^{g,1}&=&g\left( 2\right) ,\label{eq:o5.2}\\
A_{n+1,n-1}^{g,1}-A_{n,n-2}^{g,1}&=&\left( g\left( 2\right) \right) ^{2}\left( n-2\right) +g\left( 3\right) ,\nonumber \\
A_{n+1,n-2}^{g,1}-A_{n,n-3}^{g,1}&=&\left( g\left( 2\right) \right) ^{3}\binom{n-3}{2}+2g\left( 2\right) g\left( 3\right) \left( n-3\right) +g\left( 4\right) .\nonumber
\end{eqnarray}
From (\ref{eq:o5.2}) we obtain that the coefficient of $x^{n}$ of
\[
P_{n+1}^{g,1}\left( x\right) -x P_{n}^{g,1}\left( x\right) -g\left( 2\right) P_{n}^{g,1}\left( x\right)
\]
is $0$. Since the degree of $P_{n}^{g,1} \left( x\right) $ is $n$ the coefficient
of $x^{n+1}$ remains $0$.

For the coefficients of $x^{n-1}$ and $x^{n-2}$ we obtain
\begin{eqnarray*}
&&A_{n+1,n-1}^{g,1}-A_{n,n-2}^{g,1}-g\left( 2\right) A_{n,n-1}^{g,1}\\
&=&-\left( g\left( 2\right) \right) ^{2}+g\left( 3\right) ,\\
&&A_{n+1,n-2}^{g,1}-A_{n,n-3}^{g,1}-g\left( 2\right) A_{n,n-2}^{g,1}\\
&=&-\left( g\left( 2\right) \right) ^{3}\left( n-3\right) +
g\left( 2\right) g\left( 3\right) \left( n-4\right) +g\left( 4\right) .
\end{eqnarray*}
Going back we can observe that the coefficient of $x^{n-1}$ of
\[
P_{n+1}^{g,1}\left( x\right) -x P_{n}^{g,1}\left( x\right) -g\left( 2\right) 
P_{n}^{g,1}\left( x\right) -\left( g\left( 3\right) -
\left( g\left( 2\right) \right) ^{2}\right) P_{n-1}^{g,1}\left( x\right)
\]
is $0$. For $x^{n-2}$ we obtain
\begin{eqnarray*}
&&A_{n+1,n-2}^{g,1}-A_{n,n-3}^{g,1}-g\left( 2\right) A_{n,n-2}^{g,1}-\left( g\left( 3\right) -\left( g\left( 2\right) \right) ^{2}\right) A_{n-1,n-2}^{g,1}\\
&=&
\left( g\left( 2\right) \right) ^{3}-2g\left( 2\right) g\left( 3\right) +g\left( 4\right) .
\end{eqnarray*}
This means that the polynomial
\[
P_{n+1}^{g,1}\left( x\right) -x P_{n}^{g,1}\left( x\right) -g\left( 2\right) P_{n}^{g,1}\left( x\right) -\left( g\left( 3\right) -\left( g\left( 2\right) \right) ^{2}\right) P_{n-1}^{g,1}\left( x\right) 
\]
can only be $0$ if
$
\left( g\left( 2\right) \right) ^{3}-2g\left( 2\right) g\left( 3\right) +g\left( 4\right) =0$.
\end{proof}
This implies:

\begin{corollary}\label{not}
\label{nichtorthogonal}$Q_{n}\left( x\right) $
does not constitute
a sequence of orthogonal polynomials.
\end{corollary}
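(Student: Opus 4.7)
The plan is to simply invoke the preceding theorem with $g=s$, where $s(n)=n^2$, and verify the non-vanishing hypothesis by direct computation. Since the theorem asserts that $P_n^{g,1}(x)$ fails to be orthogonal whenever
\[
\left(g(2)\right)^3 - 2g(2)g(3) + g(4) \neq 0,
\]
the corollary reduces to evaluating this expression at the specific arithmetic function $s(n)=n^2$.

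First I would record the relevant values: $s(2)=4$, $s(3)=9$, and $s(4)=16$. Substituting into the quantity above yields
\[
4^3 - 2\cdot 4 \cdot 9 + 16 = 64 - 72 + 16 = 8,
\]
which is nonzero. The hypothesis of the preceding theorem is therefore satisfied, and so $\{Q_n(x)\}_n = \{P_n^{s,1}(x)\}_n$ cannot constitute a sequence of orthogonal polynomials.

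There is no real obstacle here: the whole content of the corollary is in the previous theorem, and what remains is a one-line arithmetic check. The only thing worth flagging is to make sure the identification $Q_n(x) = P_n^{s,1}(x)$ and the convention $s(n)=n^2$ are invoked explicitly, so that the reader sees immediately why $g(2), g(3), g(4)$ take the values $4, 9, 16$. Once this is done, the conclusion is automatic from the theorem just proved.
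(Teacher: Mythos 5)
Your proposal is correct and matches the paper's (implicit) argument: the corollary is stated as an immediate consequence of the preceding non-orthogonality theorem, and the only content is the check $s(2)^3-2s(2)s(3)+s(4)=64-72+16=8\neq 0$, which you carry out correctly. Your version is actually slightly more explicit than the paper, which omits the arithmetic entirely.
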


\begin{proof}[Proof of Theorem \ref{four}]
In \cite{HNT20} we have shown that we can obtain a four-term recurrence relation
$Q_{n}\left( x\right) =\left( x+3\right) Q_{n-1}\left( x\right) +\left( x-3\right) Q_{n-2}\left( x\right) +Q_{n-3}\left( x\right) $
for $n\geq 4$ with initial conditions
$Q_{1}\left( x\right) =x$, $Q_{2}\left( x\right) =x^{2}+4x$, and $Q_{3}\left( x\right) =x^{3}+8x^{2}+9x$.
Corollary \ref{not} implies that this cannot be reduced to a three-term recursion
satisfying Favard's restrictions. Thus, by Favard's theorem,
$\{Q_n(x)\}_n$ is not a system of orthogonal polynomials.
\end{proof}

\section{On the fundamental roots of $\{Q_n(x)\}_n$}

The characteristic equation of (\ref{eq:four}) is given by
\begin{equation}
\lambda ^{3}-\left( x+3\right) \lambda ^{2}-\left( x-3\right) \lambda -1
=0.
\label{eq:charn2}
\end{equation}
The discriminant is $D=D\left( x\right) =x^{4}-108x^{2}$.
For $x<-6\sqrt{3}$ we have shown
(\cite{HNT20}, Lemma~5.1) that there are three real fundamental roots
of (\ref{eq:charn2}) which satisfy
$$\lambda _{1}<-2-\sqrt{3}<\lambda _{2}<-1, \,\,
0<\lambda _{3}<1.$$ 
From
\cite{HNT20}
(proof of Theorem~5.2) we
can see that
\[
Q_{n}\left( x\right) =\left( b_{1}\lambda _{1}^{n-1}+b_{2}\lambda _{2}^{n-1}+b_{3}\lambda _{3}^{n-1}\right) x
\]
for $n\geq 1$
for some $b_{1},b_{2},b_{3}\neq 0$ depending on $x$. Therefore, we have
\begin{equation}
\lim _{n\rightarrow \infty }\sqrt[n]{\left( -1\right) ^{n}Q_{n}\left( x\right) }
=-\lambda _{1}=-\lambda _{1}\left( x\right) .
\label{eq:wurzel}
\end{equation}
We obtain from
\cite{HNT20}
(proof of Theorem~5.2) that
\[
b_{m}=\frac{\left( \lambda _{m}^{2}+\lambda _{m}\right) ^{2}}{\left( \lambda _{m}
-1\right) ^{2}\left( \lambda _{m}^{2}+4\lambda _{m}+1\right) }.
\]
Note that this is independent from the condition
$
-6\sqrt{3}<x<0$. Let
$c_{m}=\frac{xb_{m}}{\lambda _{m}}=\frac{\lambda _{m}^{2}-1}{\lambda _{m}^{2}+4\lambda _{m}+1}$, then
\begin{equation}
Q_{n}\left( x\right) =c_{1}\lambda _{1}^{n}+c_{2}\lambda _{2}^{n}+c_{3}\lambda _{3}^{n}
\label{eq:vereinfacht}
\end{equation}
for $n\geq 1$. We also denote by $\lambda_3: \mathbb{R} \longrightarrow \mathbb{R}_{>0}$ the unique
positive real solution of
(\ref{eq:charn2}).

\begin{lemma}
For $-6\sqrt{3}<x<0$ there is a single real solution
$7-4\sqrt{3}<\lambda _{3}<1$ of (\ref{eq:charn2}) and
$\lambda _{3}\mapsto x$ is a diffeomorphism.
\end{lemma}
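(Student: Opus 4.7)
The plan is to view (\ref{eq:charn2}) as a one-parameter family of cubics
\[
f(\lambda,x) := \lambda^{3}-(x+3)\lambda^{2}-(x-3)\lambda-1
\]
and to combine a discriminant argument with the implicit function theorem. The uniqueness claim is immediate: $D(x)=x^{2}(x^{2}-108)<0$ on $(-6\sqrt{3},0)$, so for each such $x$ the cubic (\ref{eq:charn2}) has exactly one real root, which I denote by $\lambda_{3}(x)$. Since $f(0,x)=-1\ne 0$, this root is never $0$, and combining $f(\lambda,0)=(\lambda-1)^{3}$ (hence $\lambda_{3}(0)=1$) with continuity forces $\lambda_{3}(x)>0$ throughout the interval.

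Next I would pin down the boundary values. At $x=0$ the cubic is $(\lambda-1)^{3}$, so $\lambda_{3}\to 1$ as $x\uparrow 0$. At $x=-6\sqrt{3}$ the discriminant vanishes, so the cubic has a double root; a direct substitution verifies that this double root is $-2-\sqrt{3}$, and Vieta (the three roots sum to $x+3=3-6\sqrt{3}$) then pins the remaining simple root down as $7-4\sqrt{3}=(2-\sqrt{3})^{2}$. This identifies the continuous limit of $\lambda_{3}$ at the left endpoint.

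The monotonicity is then pure implicit function theorem. Because $\lambda_{3}(x)$ is a simple real root of a cubic with positive leading coefficient and is the \emph{only} real root, $f$ crosses from negative to positive there, so $f_{\lambda}(\lambda_{3},x)>0$. The implicit function theorem yields $\lambda_{3}\in C^{\infty}(-6\sqrt{3},0)$ with
\[
\frac{d\lambda_{3}}{dx}=-\frac{f_{x}(\lambda_{3},x)}{f_{\lambda}(\lambda_{3},x)}=\frac{\lambda_{3}(\lambda_{3}+1)}{f_{\lambda}(\lambda_{3},x)}>0,
\]
since $\lambda_{3}>0$. Strict monotonicity combined with the two boundary limits gives $7-4\sqrt{3}<\lambda_{3}(x)<1$; the strict positivity of $d\lambda_{3}/dx$ promotes the bijection onto $(7-4\sqrt{3},1)$ to a $C^{\infty}$ diffeomorphism, so the inverse $\lambda_{3}\mapsto x$ is a diffeomorphism as required.

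The main obstacle is the left-endpoint identification. The discriminant argument on its own only reveals that \emph{some} pair of roots collides at $x=-6\sqrt{3}$; one still has to show that it is the negative pair (the $\lambda_{1},\lambda_{2}$ branch from Lemma~5.1 of \cite{HNT20}) that degenerates rather than the positive branch, and for this the direct verification of $-2-\sqrt{3}$ as a double root seems unavoidable. Once that identification is in hand, everything else is sign tracking on $f_{\lambda}$ and $f_{x}$ plus a standard application of the implicit function theorem.
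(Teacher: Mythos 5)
Your proof is correct. It does, however, run along a different track from the paper's. The paper solves the characteristic equation explicitly for $x$ as a rational function of $\lambda$, namely $x=\frac{(\lambda -1)^{3}}{\lambda ^{2}+\lambda }$, and then does curve sketching on that function (citing Lemma~5.1 of \cite{HNT20}): the local maximum at $\lambda _{-}=-2-\sqrt{3}$ and local minimum at $\lambda _{+}=-2+\sqrt{3}$ take the values $\mp 6\sqrt{3}$, which simultaneously yields uniqueness of the real solution for $-6\sqrt{3}<x<6\sqrt{3}$, the location $7-4\sqrt{3}<\lambda _{3}<1$, and the positivity of $\partial x/\partial \lambda $ on that branch, whence the differentiable inverse. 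You instead keep the equation implicit: the discriminant $D(x)=x^{2}\left( x^{2}-108\right) <0$ gives uniqueness of the real root, the sign of $f_{\lambda }$ at the unique simple real crossing of a monic cubic gives $f_{\lambda }>0$, and the implicit function theorem with $-f_{x}=\lambda _{3}\left( \lambda _{3}+1\right) >0$ gives smooth strict monotonicity; the interval $\left( 7-4\sqrt{3},1\right) $ then comes from the two boundary limits (triple root at $x=0$, double root $-2-\sqrt{3}$ plus Vieta at $x=-6\sqrt{3}$). Your positivity-by-continuity argument (the root never vanishes since $f(0,x)=-1$, and equals $1$ at $x=0$) in fact already resolves the ``which pair collides'' worry you raise at the end: since $\lambda _{3}>0$ throughout and $-2-\sqrt{3}<0$, the left-endpoint limit can only be the positive simple root $7-4\sqrt{3}$. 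The paper's route is shorter because the explicit formula for $x(\lambda )$ and its critical points were already established in \cite{HNT20}; yours is self-contained and more modular, at the cost of three separate verifications (discriminant sign, positivity, endpoint identification) where the paper reads everything off one graph.
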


\begin{proof}
For $\lambda _{-}=-2-\sqrt{3}$ we have seen in
\cite{HNT20}
(proof of Lemma~5.1) that there is a local maximal
point of
$x=\frac{\left( \lambda -1\right) ^{3}}{\lambda ^{2}+\lambda }$.
It can also be observed that at $\lambda _{+}=-2+\sqrt{3}$
there is a local minimal point and that for
$-6\sqrt{3}<x<6\sqrt{3}$ there is only one real
$7-4\sqrt{3}<\lambda _{3}<7+4\sqrt{3}$ that solves
(\ref{eq:charn2}).
Let $-6\sqrt{3}<x<0$, then
$7-4\sqrt{3}<\lambda _{3}<1$ and
$\frac{\partial x}{\partial \lambda }>0$ (for the last see
again
\cite{HNT20}, proof of Lemma~5.1). Therefore, there is a continuous
and
even differentiable
inverse $x\mapsto \lambda _{3}$.
\end{proof}

\begin{proposition}
Let $7-4\sqrt{3}<\lambda _{3}<1$. Then the corresponding two other solutions of
(\ref{eq:charn2}) can be described, depending on $\lambda _{3}$ as
$\lambda _{1}=\mu +\mathrm{i}\nu $ and $\lambda _{2}=\mu -\mathrm{i}\nu $ with
\begin{eqnarray}
\mu &=&\mu \left( \lambda _{3}\right) =-\frac{1}{2}\frac{\lambda _{3}^{2}-6\lambda _{3}+1}{\lambda _{3}^{2}+\lambda _{3}},
\label{eq:realteil}\\
\nu &=&\nu \left( \lambda _{3}
\right) =\frac{1}{2}\sqrt{-\lambda _{3}^{2}+14\lambda _{3}-1}
\frac{
1-\lambda _{3}
}{\lambda _{3}^{2}+\lambda _{3}}
.
\label{eq:imaginaerteil}
\end{eqnarray}
\end{proposition}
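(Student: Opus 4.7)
The plan is to determine $\lambda_1$ and $\lambda_2$ via Vieta's formulas applied to the cubic (\ref{eq:charn2}) after we have extracted $\lambda_3$ as a root, and then to use the relation between $x$ and $\lambda_3$ to express everything purely in terms of $\lambda_3$.

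First, I rewrite (\ref{eq:charn2}) as $(\lambda-1)^3 = x(\lambda^2+\lambda)$, so that for any root $\lambda_3\neq 0, -1$ one has
\[
x=\frac{(\lambda_3-1)^3}{\lambda_3^2+\lambda_3}.
\]
By Vieta applied to (\ref{eq:charn2}) the three roots satisfy $\lambda_1+\lambda_2+\lambda_3=x+3$ and $\lambda_1\lambda_2\lambda_3=1$. Hence
\[
\lambda_1+\lambda_2=x+3-\lambda_3,\qquad \lambda_1\lambda_2=\frac{1}{\lambda_3},
\]
so $\lambda_1,\lambda_2$ are roots of the quadratic $\lambda^2-(x+3-\lambda_3)\lambda+\lambda_3^{-1}=0$.

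Next, I substitute the expression for $x$ into $x+3-\lambda_3$ and simplify the numerator $(\lambda_3-1)^3+(3-\lambda_3)(\lambda_3^2+\lambda_3)$, which collapses to $-\lambda_3^2+6\lambda_3-1$. This immediately yields the formula (\ref{eq:realteil}) for $\mu=\tfrac12(\lambda_1+\lambda_2)$. For the imaginary part I use $(\lambda_1-\lambda_2)^2=(\lambda_1+\lambda_2)^2-4\lambda_1\lambda_2$ and write $\lambda_1=\mu+\mathrm{i}\nu$, $\lambda_2=\mu-\mathrm{i}\nu$, so that
\[
4\nu^2=\frac{4}{\lambda_3}-(x+3-\lambda_3)^2
=\frac{4\lambda_3(\lambda_3+1)^2-(\lambda_3^2-6\lambda_3+1)^2}{(\lambda_3^2+\lambda_3)^2}.
\]
Expanding the numerator yields $-\lambda_3^4+16\lambda_3^3-30\lambda_3^2+16\lambda_3-1$, which factors as $(-\lambda_3^2+14\lambda_3-1)(1-\lambda_3)^2$; this gives exactly (\ref{eq:imaginaerteil}).

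It remains to check that on the interval $7-4\sqrt{3}<\lambda_3<1$ the two other roots are genuinely non-real, i.e. $\nu^2>0$. Since $(1-\lambda_3)^2>0$, the sign of $\nu^2$ is that of $-\lambda_3^2+14\lambda_3-1$, whose real roots are $7\pm 4\sqrt{3}$; hence the quadratic is positive on $(7-4\sqrt{3},7+4\sqrt{3})$, in particular on $(7-4\sqrt{3},1)$. Thus $\nu\neq 0$ and the choice of sign for $\nu$ is fixed to be positive by the convention in the statement. The only obstacle here is the bookkeeping in the polynomial identity $4\lambda_3(\lambda_3+1)^2-(\lambda_3^2-6\lambda_3+1)^2=(-\lambda_3^2+14\lambda_3-1)(1-\lambda_3)^2$, which is a purely mechanical expansion and carries no conceptual difficulty.
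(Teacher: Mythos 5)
Your proof is correct and follows essentially the same route as the paper: both extract $\mu$ from Vieta's formula for the sum of roots combined with $x=(\lambda_3-1)^3/(\lambda_3^2+\lambda_3)$, and both obtain $\nu$ from the identity $\nu^2=\lambda_3^{-1}-\mu^2$ (the paper via $|\lambda_1|^2\lambda_3=1$, you via the discriminant of the quadratic, which is the same computation). Your added check that $-\lambda_3^2+14\lambda_3-1>0$ on $(7-4\sqrt{3},1)$, confirming the two roots are genuinely non-real, is a small extra verification the paper delegates to its preceding lemma.
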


\begin{proof}
Let $\lambda _{1}=\mu +\mathrm{i}\nu $ with $\mu ,\nu \in \mathbb{R}$.
Let $7-
4\sqrt{3}\leq \lambda _{3}
\leq
7+4\sqrt{3}$. Then
$3+x=\lambda _{1}+\lambda _{2}+\lambda _{3}=2\mu +\lambda _{3}$ and therefore
\[
\mu =\mu \left( \lambda _{3}\right) =\frac{1}{2}\left( 3+x-\lambda _{3}
\right) =-\frac{1}{2}\frac{\lambda _{3}^{2}-6\lambda _{3}+1}{\lambda _{3}^{2}+\lambda _{3}}.
\]
Since
$1
=\lambda _{1}\lambda _{2}
\lambda _{3}=\left( \mu ^{2}+\nu ^{2}\right)
\lambda _{3}$
we obtain
\[
\nu =\nu \left( \lambda _{3}
\right) =\sqrt{
\frac{1}{\lambda _{3}}
-\left( \mu \left( \lambda _{3}
\right) \right) ^{2}}=\frac{1}{2}\sqrt{-\lambda _{3}^{2}+14\lambda _{3}-1}
\frac{
1-\lambda _{3}
}{\lambda _{3}^{2}+\lambda _{3}}
.
\]
\end{proof}

\begin{proposition}
Let $7-4\sqrt{3}<\lambda _{3}<1$. Then the corresponding
$c_{1}=\frac{\lambda _{1}^{2}-1}{\lambda _{1}^{2}+4\lambda _{1}+1}$ can be
described as
\begin{equation}
c_{1}=c_{1}\left( \lambda _{3}\right) =
\frac{1-\lambda _{3}^{2}
}{2\left( \lambda _{3}^{2}+4\lambda _{3}+1\right) }
+\mathrm{i}
\frac{\left( 1-\lambda _{3}
\right) \left( \lambda _{3}^{2}+10\lambda _{3}+1\right)
}{
2\left( \lambda _{3}^{2}+4\lambda _{3}+1\right)
\sqrt{
-\lambda _{3}^{2}+
14\lambda _{3}-1}}.
\label{eq:c1lambda3}
\end{equation}
The real and imaginary parts are positive.
\end{proposition}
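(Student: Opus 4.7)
The plan is to compute $c_1 = (\lambda_1^2 - 1)/(\lambda_1^2 + 4\lambda_1 + 1)$ directly by substituting $\lambda_1 = \mu + \mathrm{i}\nu$ from the preceding proposition and rationalizing. Since $\lambda_2 = \overline{\lambda_1}$, multiplying through by $\lambda_2^2 + 4\lambda_2 + 1$ turns the denominator into a real quantity, namely $\left| \lambda_1^2 + 4\lambda_1 + 1 \right|^2$. Vieta's formulas for the characteristic cubic (\ref{eq:charn2}) give $\lambda_1 + \lambda_2 = 2\mu$ and $\lambda_1 \lambda_2 = 1/\lambda_3$, so every symmetric quantity in $\lambda_1,\lambda_2$ is a rational function of $\mu,\nu$ and $\lambda_3$, and the formulas $\mu = \mu(\lambda_3)$, $\nu = \nu(\lambda_3)$ from the previous proposition then eliminate $\mu$ and $\nu$.

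For the numerator I would separate the real and imaginary parts of $(\lambda_1^2 - 1)(\lambda_2^2 + 4\lambda_2 + 1)$ in terms of $\mu,\nu$. The cross-terms simplify considerably: using $(\mu^2 - \nu^2)^2 + (2\mu\nu)^2 = (\mu^2 + \nu^2)^2$ and $\mu(\mu^2 - \nu^2) + (2\mu\nu)\nu = \mu(\mu^2+\nu^2)$, the real part collapses to $(P - 1)(P + 1 + 4\mu)$ and the imaginary part to $4\nu(P + \mu + 1)$, with $P = 1/\lambda_3$. Substituting $\mu(\lambda_3)$, $\nu(\lambda_3)$ and clearing over the common denominator $\lambda_3^2 + \lambda_3$ produces the factors $(1 - \lambda_3)(-\lambda_3^2 + 14\lambda_3 - 1)$ in the real part and $\sqrt{-\lambda_3^2 + 14\lambda_3 - 1}\,(1 - \lambda_3)(\lambda_3^2 + 10\lambda_3 + 1)$ in the imaginary part of the numerator.

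For the denominator I would use the factorization $\lambda^2 + 4\lambda + 1 = (\lambda + 2 - \sqrt{3})(\lambda + 2 + \sqrt{3})$ together with $(\lambda_1 - r)(\lambda_2 - r)(\lambda_3 - r) = -p(r)$ (where $p$ denotes the characteristic cubic of (\ref{eq:charn2})) to write
\[
(\lambda_1^2 + 4\lambda_1 + 1)(\lambda_2^2 + 4\lambda_2 + 1) = \frac{p(-2 + \sqrt{3})\,p(-2 - \sqrt{3})}{\lambda_3^2 + 4\lambda_3 + 1}.
\]
A short direct evaluation yields $p(-2 \pm \sqrt{3}) = (\mp 3\sqrt{3} - 5)(x \pm 6\sqrt{3})$, and the product telescopes to $2(108 - x^2)$. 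So the denominator equals $2(108 - x^2)/(\lambda_3^2 + 4\lambda_3 + 1)$, which already accounts for the factor $\lambda_3^2 + 4\lambda_3 + 1$ in the target.

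Combining the two computations and using $x\lambda_3(\lambda_3 + 1) = (\lambda_3 - 1)^3$ to rewrite $108 - x^2$ in $\lambda_3$, both the real and imaginary claims reduce to the single polynomial identity
\[
(-\lambda_3^2 + 14\lambda_3 - 1)(\lambda_3^2 + 4\lambda_3 + 1)^2 = 108\,\lambda_3^2(\lambda_3 + 1)^2 - (\lambda_3 - 1)^6.
\]
Verifying this degree-six identity by expansion is the main technical obstacle, but it is routine. Positivity on $(7 - 4\sqrt{3},1)$ then follows factor-by-factor: on this interval $0 < \lambda_3 < 1$, which makes $1 - \lambda_3$, $1 - \lambda_3^2$, $\lambda_3^2 + 4\lambda_3 + 1$ and $\lambda_3^2 + 10\lambda_3 + 1$ all positive, while $-\lambda_3^2 + 14\lambda_3 - 1$ is positive precisely between its roots $7 \pm 4\sqrt{3}$.
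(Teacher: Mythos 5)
Your proposal is correct, and it splits naturally into two halves. For the numerator your route coincides with the paper's: both reduce $(\lambda_1^2-1)(\overline{\lambda_1}^2+4\overline{\lambda_1}+1)$ to the real part $(P-1)(P+4\mu+1)$ and imaginary part $4\nu(P+\mu+1)$ with $P=|\lambda_1|^2=1/\lambda_3$, and then substitute $\mu(\lambda_3)$, $\nu(\lambda_3)$. For the denominator you genuinely diverge: the paper expands $|\lambda_1^2+4\lambda_1+1|^2=(\mu^2-\nu^2+4\mu+1)^2+(2\mu+4)^2\nu^2$ directly in $\lambda_3$, whereas you evaluate the resultant-style product $\prod_j(\lambda_j^2+4\lambda_j+1)=p(-2+\sqrt{3})\,p(-2-\sqrt{3})=2(108-x^2)$ and divide off the $\lambda_3$-factor. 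Your version is arguably cleaner, since $2(108-x^2)$ drops out of a two-line computation and its positivity on $(-6\sqrt{3},0)$ is immediate; the price is the degree-six identity $(-\lambda_3^2+14\lambda_3-1)(\lambda_3^2+4\lambda_3+1)^2=108\lambda_3^2(\lambda_3+1)^2-(\lambda_3-1)^6$, which I have checked and which is no worse than the expansion the paper performs. Two minor points: your intermediate formula $p(-2\pm\sqrt{3})=(\mp3\sqrt{3}-5)(x\pm6\sqrt{3})$ has the two values swapped (the correct assignment is $p(-2+\sqrt{3})=(3\sqrt{3}-5)(x-6\sqrt{3})$ and $p(-2-\sqrt{3})=(-3\sqrt{3}-5)(x+6\sqrt{3})$), but since only the product enters, this is harmless; and your factor-by-factor positivity argument on $(7-4\sqrt{3},1)$ matches the paper's.
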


\begin{proof}
We have
$c_{1}=\frac{\left( \lambda _{1}^{2}-1\right) \left( \overline{\lambda _{1}^{2}}+4\overline{\lambda _{1}}+1\right) }{\left| \lambda _{1}^{2}+4\lambda _{1}+1\right| ^{2}}$
and as $\func{Im}\left( \lambda _{1}^{2}\right) =2\mu \nu $ with numerator
\begin{eqnarray*}
&&\left| \lambda _{1}\right| ^{4}+2\mathrm{i}\func{Im}\left( \lambda _{1}^{2}\right) +4\left( \left| \lambda _{1}^{2}\right| \lambda _{1}-
\overline{\lambda _{1}}\right) -1\\
&=&\left( \left| \lambda _{1}\right| ^{2}+4\mu +1\right)
\left( \left| \lambda _{1}^{2}\right| -1\right)
+4\mathrm{i}\left( \mu +\left| \lambda _{1}^{2}\right|
+1\right) \nu .
\end{eqnarray*}
Using
$\lambda _{1}\lambda _{2}\lambda _{3}=1$ and
$\lambda _{2}=\overline{\lambda _{1}}$, we obtain
$\frac{1}{\lambda _{3}}
+4\mu +1=
\frac{-\lambda _{3}^{2}
+14\lambda _{3}
-1}{\lambda _{3}^{2}+\lambda _{3}}
>0$
and
$
\mu +
\frac{1}{\lambda _{3}}
+1
=\frac{
\lambda _{3}^{2}+10
\lambda _{3}+1}{2\left( \lambda _{3}^{2}+\lambda _{3}\right) }
>0$.
Now
$\func{Re}\left( \lambda _{1}^{2}\right) =\mu ^{2}-\nu ^{2}=\frac{
\lambda _{3}^{4}-
14\lambda _{3}^{3}+
34\lambda _{3}^{2}-
14\lambda _{3}+
1}{2\left( \lambda _{3}^{2}+\lambda _{3}\right) ^{2}}$.
Therefore,
\begin{eqnarray*}
\left| \lambda _{1}^{2}+4\lambda _{1}+1\right| ^{2}&=&\left( \mu ^{2}-\nu ^{2}+4\mu +1\right) ^{2}+\left( 2\mu
+4\right) ^{2}\nu ^{2}\\
&=&
\frac{2\left( -\lambda _{3}^{2}
+14\lambda _{3}
-1\right) \left(
\lambda _{3}^{2}+4
\lambda _{3}+1
\right) }{\left( \lambda _{3}^{2}+\lambda _{3}\right) ^{
2}}.
\end{eqnarray*}
From this we obtain
\begin{eqnarray*}
\func{Re}\left( c_{1}\right) &=&
\frac{1-\lambda _{3}^{2}
}{2\left( \lambda _{3}^{2}+4\lambda _{3}+1\right) }>0,\\
\func{Im}\left( c_{1}\right) &=&
\frac{\left( 1-\lambda _{3}
\right) \left( \lambda _{3}^{2}+10\lambda _{3}+1\right)
}{
2\left( \lambda _{3}^{2}+4\lambda _{3}+1\right)
\sqrt{
-\lambda _{3}^{2}+
14\lambda _{3}-1}}>0.
\end{eqnarray*}
\end{proof}

\begin{corollary}
Let $7-4\sqrt{3}<\lambda _{3}<1$. Then
\[
\left| c_{1}\right|
=\sqrt{
\frac{
8\left( \lambda _{3}
-1\right) ^{2}\lambda _{3}
}{\left( -\lambda _{3}^{2}
+14\lambda _{3}
-1\right) \left(
\lambda _{3}^{2}+4
\lambda _{3}
+1\right) }}
\]
and
\begin{equation}
\left| c_{1}\right| >\left| c_{3}\right| .
\label{eq:c1groesserc3}
\end{equation}
\end{corollary}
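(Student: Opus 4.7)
The plan is to first derive the closed form for $|c_1|^2$ from the real and imaginary parts supplied by the preceding proposition, and then to extract the strict inequality $|c_1|>|c_3|$ by comparing that closed form with an elementary expression for $|c_3|$. Throughout I rely on the fact that $\lambda_3\in (7-4\sqrt{3},1)$ lies inside both $(0,1)$ and $(7-4\sqrt{3},7+4\sqrt{3})$, so the quantities $\lambda_3$, $1-\lambda_3$, $\lambda_3^2+4\lambda_3+1$ and $-\lambda_3^2+14\lambda_3-1$ are all strictly positive on this interval.

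For the closed form, I square the two components in (\ref{eq:c1lambda3}) and add them over the common denominator $4(\lambda_3^2+4\lambda_3+1)^2(-\lambda_3^2+14\lambda_3-1)$. A common factor $(1-\lambda_3)^2$ can be pulled out of the numerator, reducing the problem to the single polynomial identity
\[
(1+\lambda_3)^2(-\lambda_3^2+14\lambda_3-1)+(\lambda_3^2+10\lambda_3+1)^2 = 32\,\lambda_3(\lambda_3^2+4\lambda_3+1),
\]
which is checked by direct expansion. Substituting this back and cancelling immediately produces the claimed formula for $|c_1|^2$.

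For the inequality, observe that $c_3=(\lambda_3^2-1)/(\lambda_3^2+4\lambda_3+1)$ is real and negative on our interval, so $|c_3|=(1-\lambda_3^2)/(\lambda_3^2+4\lambda_3+1)$. Writing $|c_1|^2-|c_3|^2$ over the common denominator $(\lambda_3^2+4\lambda_3+1)^2(-\lambda_3^2+14\lambda_3-1)$ and again factoring out $(1-\lambda_3)^2$, the remaining numerator simplifies through the second polynomial identity
\[
8\lambda_3(\lambda_3^2+4\lambda_3+1)-(1+\lambda_3)^2(-\lambda_3^2+14\lambda_3-1)=(\lambda_3-1)^4,
\]
yielding
\[
|c_1|^2-|c_3|^2 = \frac{(1-\lambda_3)^6}{(\lambda_3^2+4\lambda_3+1)^2(-\lambda_3^2+14\lambda_3-1)}.
\]
The right-hand side is strictly positive on $(7-4\sqrt{3},1)$ by the sign considerations above, so $|c_1|>|c_3|$.

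The only real obstacle in this plan is the polynomial bookkeeping that produces the two identities above. Neither is deep, but the striking feature worth emphasizing in the write-up is that $|c_1|^2-|c_3|^2$ collapses to a clean sixth power $(1-\lambda_3)^6$ divided by manifestly positive factors; once this cancellation is in place, the inequality is immediate.
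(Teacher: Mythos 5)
Your proposal is correct and follows exactly the paper's route: square the real and imaginary parts from (\ref{eq:c1lambda3}) to get the closed form for $\left| c_{1}\right| ^{2}$, then show $\left| c_{1}\right| ^{2}-\left| c_{3}\right| ^{2}=\frac{\left( \lambda _{3}-1\right) ^{6}}{\left( -\lambda _{3}^{2}+14\lambda _{3}-1\right) \left( \lambda _{3}^{2}+4\lambda _{3}+1\right) ^{2}}>0$. The paper merely states these two computations without displaying the intermediate polynomial identities, which you verify correctly.
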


\begin{proof}
Using (\ref{eq:c1lambda3}) we obtain
\[
\left| c_{1}\right| ^{2}
=
\frac{
8\left( \lambda _{3}
-1\right) ^{2}\lambda _{3}
}{\left( -\lambda _{3}^{2}
+14\lambda _{3}
-1\right) \left(
\lambda _{3}^{2}+4
\lambda _{3}
+1\right) }
\]
and
$\left| c_{1}\right| ^{2}-\left| c_{3}\right| ^{2}=
\frac{\left( \lambda _{3}-1\right) ^{6}}{\left( -\lambda _{3}^{2}
+14\lambda _{3}-
1\right) \left( \lambda _{3}^{2}+4\lambda _{3}+1\right) ^{2}}>0$
for $7-4\sqrt{3}<\lambda _{3}<1$.
\end{proof}

\begin{proposition}
Let $7-4\sqrt{3}<\lambda _{3}<1$. Then in the polar decompositions
$\lambda _{1}=r\mathrm{e}^{\mathrm{i}\vartheta }$ and
$c_{1}=s\mathrm{e}^{\mathrm{i}\omega }$ holds
$\vartheta =\vartheta \left( \lambda _{3}\right) =\func{arccot}\left( \frac{\mu }{\nu }\right) =\func{arccot}\left( \frac{\lambda _{3}^{2}-6\lambda _{3}
+1}{\left( \lambda _{3}-1\right) \sqrt{-\lambda _{3}^{2}+14\lambda _{3}-1}}\right) $
and
$\omega =\omega \left( \lambda _{3}\right) =\func{arccot}\left( \frac{\left( \lambda _{3}+1\right) \sqrt{-\lambda _{3}^{2}+14\lambda _{3}-1}}{\lambda _{3}^{2}+10\lambda _{3}+1}\right) $.
Both are strictly decreasing functions of $\lambda _{3}$.
\end{proposition}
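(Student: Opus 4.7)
The plan is to derive the two closed-form expressions directly from the polar decomposition, and then establish the monotonicity through a short calculus argument that in each case reduces to the sign of a single factored polynomial in $\lambda_3$. For the formulas themselves, I would use that any $z = a + \mathrm{i}b$ with $b > 0$ satisfies $\arg z \in (0,\pi)$ and $\arg z = \func{arccot}(a/b)$; the preceding two propositions provide $\nu > 0$ and $\func{Im}(c_1) > 0$ on $(7-4\sqrt{3}, 1)$, so this applies to both $\lambda_1$ and $c_1$. Dividing (\ref{eq:realteil}) by (\ref{eq:imaginaerteil}), the common prefactor $\tfrac{1}{2(\lambda_3^2+\lambda_3)}$ cancels and leaves precisely the stated expression for $\mu/\nu$. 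Dividing $\func{Re}(c_1)$ by $\func{Im}(c_1)$ in (\ref{eq:c1lambda3}), the denominator $2(\lambda_3^2+4\lambda_3+1)$ cancels, and writing $1-\lambda_3^2 = (1-\lambda_3)(1+\lambda_3)$ removes the factor $1-\lambda_3$ coming from $\func{Im}(c_1)$, yielding the claimed formula for $\omega$.

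For the monotonicity, since $\func{arccot}$ is strictly decreasing on $\mathbb{R}$, it suffices to show that the argument of $\func{arccot}$ is strictly increasing on $(7-4\sqrt{3}, 1)$ in each case. For $\vartheta$ I would instead work with $\cos\vartheta$: since $\vartheta \in (0,\pi)$, $\vartheta$ is strictly decreasing iff $\cos\vartheta$ is strictly increasing. Using $\left|\lambda_1\right|^2 = 1/\lambda_3$, which follows from $\lambda_1\lambda_2\lambda_3 = 1$ with $\lambda_2 = \overline{\lambda_1}$, one obtains
\[
\cos\vartheta \;=\; \mu\sqrt{\lambda_3} \;=\; -\,\frac{\lambda_3^2 - 6\lambda_3 + 1}{2(\lambda_3+1)\sqrt{\lambda_3}}.
\]
The quotient rule then shows that its derivative has the sign of $-(\lambda_3-1)(\lambda_3^2 + 10\lambda_3+1)$, which is positive on our interval: $\lambda_3^2 + 10\lambda_3 + 1 > 0$ for $\lambda_3 > 0$ because its roots $-5 \pm 2\sqrt{6}$ are both negative, while $\lambda_3 - 1 < 0$.

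For $\omega$, setting $A(\lambda_3) = (1+\lambda_3)\sqrt{-\lambda_3^2+14\lambda_3-1}/(\lambda_3^2+10\lambda_3+1)$, since $A > 0$ it suffices to prove that $A^2 = (1+\lambda_3)^2 p/q^2$, with $p = -\lambda_3^2+14\lambda_3-1$ and $q = \lambda_3^2+10\lambda_3+1$, is strictly increasing. Applying the quotient rule and expanding the bracket collapses to the striking identity
\[
(A^2)'(\lambda_3) \;=\; \frac{-32\,(1+\lambda_3)(\lambda_3-1)^3}{q(\lambda_3)^3},
\]
which is positive on $(7-4\sqrt{3}, 1)$ since $\lambda_3 < 1$. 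The main obstacle is precisely obtaining the clean factors $(\lambda_3-1)(\lambda_3^2+10\lambda_3+1)$ and $-32(\lambda_3-1)^3$; these are not visible a priori and rely on substantial cancellation among the terms produced by the quotient rule. Once these factorizations are in hand, the sign analysis on $(7-4\sqrt{3}, 1)$ is immediate.
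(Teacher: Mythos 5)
Your proposal is correct and follows essentially the same strategy as the paper: derive $\cot\vartheta=\mu/\nu$ and $\cot\omega=\func{Re}(c_1)/\func{Im}(c_1)$ from the two preceding propositions (using $\nu>0$ and $\func{Im}(c_1)>0$ to pin down the branch of $\func{arccot}$ with range $(0,\pi)$), then show each quotient is strictly increasing by differentiating and factoring. Your two computational shortcuts --- working with $\cos\vartheta=\mu\sqrt{\lambda_3}$ instead of $\mu/\nu$, and with $(\cot\omega)^2$ instead of $\cot\omega$ to avoid the square root --- are minor variants whose resulting factorizations, $-(\lambda_3-1)(\lambda_3^2+10\lambda_3+1)$ and $-32(1+\lambda_3)(\lambda_3-1)^3/q^3$, I have checked and which are consistent with the paper's derivatives.
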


\begin{proof}
From (\ref{eq:realteil}) and (\ref{eq:imaginaerteil}) we obtain
$\cot \left( \vartheta \right) =\frac{\mu }{\nu }=\frac{\lambda _{3}^{2}-6\lambda _{3}
+1}{\left( \lambda _{3}-1\right) \sqrt{-\lambda _{3}^{2}+14\lambda _{3}-1}}$.
Further,
\begin{eqnarray*}
\frac{\partial }{\partial \lambda _{3}}\frac{\mu }{\nu }&=&\frac{\lambda _{3}^{2}-2\lambda _{3}+5}{\left(
\lambda _{3}-1\right) ^{2}\sqrt{-\lambda _{3}^{2}+14\lambda _{3}-1}}-\frac{\left( \lambda _{3}^{2}-6\lambda _{3}+1\right) \left( -2\lambda _{3}+14\right) }{2\left(
\lambda _{3}-1\right) \left( -\lambda _{3}^{2}+14\lambda _{3}-1\right) ^{3/2}}\\
&=&\frac{2\left( \lambda _{3}+1\right) \left( \lambda _{3}^{2}+10\lambda _{3}+1\right) }{\left( \lambda _{3}-1\right) ^{2}\left( -\lambda _{3}^{2}+14\lambda _{3}-1\right) ^{3/2}}>0.
\end{eqnarray*}
Therefore, $\frac{\mu }{\nu }$ is monotonically increasing in $\lambda _{3}$.

We have
$\frac{\func{Re}\left( c_{1}\right) }{\func{Im}\left( c_{1}\right) }=\frac{\left( \lambda _{3}+1\right) \sqrt{-\lambda _{3}^{2}+14\lambda _{3}-1}}{\lambda _{3}^{2}+10\lambda _{3}+1}$.
Deriving we obtain
\[
\frac{\partial }{\partial \lambda _{3}}\frac{\func{Re}\left( c_{1}\right) }{\func{Im}\left( c_{1}\right) }=\frac{16\left( 1-\lambda _{3}\right) ^{3}}{\left( \lambda _{3}^{2}+10\lambda _{3}+1\right) ^{2}\sqrt{-\lambda _{3}^{2}+14\lambda _{3}-1}}>0
\]
for $7-4\sqrt{3}<\lambda _{3}<1$.
\end{proof}

\section{Proof of Theorem \ref{reellenullstellen}, Corollary \ref{La}, and Corollary \ref{verschraenkt} }

\subsection{Proof of Theorem \ref{reellenullstellen}}
The simplified representation (\ref{eq:vereinfacht}) shows
for $n\geq 1$
with
$\mu $
and $\nu $ from
(\ref{eq:realteil})
and
(\ref{eq:imaginaerteil}), resp., and (\ref{eq:c1lambda3}) that
$Q_{n}\left( x\right) =c_{1}\left( \mu +\mathrm{i}\nu \right) ^{n}+\overline{c_{
1}}\left( \mu -\mathrm{i}\nu \right) ^{n}+c_{3}\lambda _{3}^{n}$.

Let $7-4\sqrt{3}
<\lambda _{3}
<1$.
There is $0<\vartheta
<\pi
$ and $r=\frac{1}{\sqrt{\lambda _{3}}}
$, such that
$\lambda _{1}=\mu +\mathrm{i}\nu =r\mathrm{e}^{\mathrm{i}\vartheta }$ and
$\lambda _{2}=r\mathrm{e}^{-\mathrm{i}\vartheta }$ as
$\lambda _{2}=\overline{\lambda _{1}}$.
Then
$c_{1}\lambda _{1}^{n}+c_{2}\lambda _{2}^{n}=2\func{Re}\left( c_{1}\lambda _{1}^{n}\right) $.
If $c_{1}=s\mathrm{e}^{\mathrm{i}\omega }$
then
$0<\omega
<\frac{\pi }{2}
$,
$s=\left| c_{1}\right| >0$,
and
$2\func{Re}\left( c_{1}\lambda _{1}^{n}\right) =2r^{n}s\cos \left( \omega +n\vartheta \right) $.
Therefore,
$Q_{n}\left( x\right) =2r^{n}s\cos \left( \omega +n\vartheta \right) +c_{3}\lambda _{3}^{n}$.

We have the following continuous
functions of
$\lambda _{3}$:
$\vartheta =\func{arccot} \left( \frac{\mu }{\nu }\right) $
and
$\omega =\func{arccot} \left( \frac{\func{Re}\left( c_{1}\right) }{\func{Im}\left( c_{1}\right) }\right) $.
They are both strictly decreasing,
with $0<\omega <\frac{\pi }{2}$ and surjectively $0<\vartheta <\pi $.
If $n$ is fixed,
then
$\lambda _{3}\mapsto \omega +n\vartheta $ is a continuous function of
$\lambda _{3}$ with $\omega +n\vartheta >
n\pi $ for
$\lambda _{3}\searrow 7-4\sqrt{3}$ and $\omega +n\vartheta <
\frac{\pi }{2}$ for
$\lambda _{3}\nearrow 1$.
In this way
we can obtain $0<\omega _{n,k}<\frac{\pi }{2}$ and $0<\vartheta _{n,k}<\pi $,
such that
\[
\omega _{n,k}+n\vartheta _{n,k}=k\pi
\]
for $1\leq k\leq n$
which by monotonicity uniquely
correspond to
values
$\lambda _{3,n,k}$ and the
associated values of $c_{1,n,k}$ and $\lambda _{1,n,k}$.
Therefore,
$\func{Re}\left( c_{1,n,k}\lambda _{1,n,k}^{n}\right) =
\left( -1\right) ^{
k}r^{n}s$.
Since
$r=\left| \lambda _{1}\right| =\frac{1}{\sqrt{\lambda _{3}}}>\lambda _{3}$
and $s=\left| c_{1}\right| >\left| c_{3}\right| $ by (\ref{eq:c1groesserc3})
the sign of
$Q_{n}\left( x\right) $ in the range $-6\sqrt{3}<x<0$ is determined by
$\func{Re}\left( c_{1}\lambda _{1}^{n}\right) $.

We have shown, that on $-6\sqrt{3}<x<0$ there are $n-1$ changes of sign
of $Q_{n}\left( x\right) $ which imply
by continuity $n-1$ real zeros.
The last one is located at $x=0$.

\subsection{Proof of Corollary \ref{verschraenkt}}
We order the zeros $\bar{x}_{n,k}$
of $Q_n(x)$:
\[
- 6 \sqrt{3} < \bar{x}_{n,n} < \bar{x}_{n,n-1} < \ldots < \bar{x}_{n,1}=0.
\]
We show first that the limits of the interval are limits of
$\bar{x}_{n,n}$ and $\bar{x}_{n,2}$,
resp. For $-6\sqrt{3}$ we know that there are
$\omega _{n,n-1}<\bar{\omega }_{n,n}<\omega _{n,n}$ and
$\vartheta _{n,n-1}<\bar{\vartheta }_{n,n}<\vartheta _{n,n}$
that correspond to the zero
$\bar{x}_{n,n
}$ of $Q_{n}\left( x\right) $. Since
$\omega _{n,n-1}+n\vartheta _{n,n-1}=\left( n-1\right) \pi $ and
$0<\omega _{n,n-1}<\frac{\pi }{2}$ we can observe that
$\vartheta _{n,n-1}=\pi -\frac{\pi +\omega _{n,n-1}
}{n}\rightarrow \pi $.
As $\vartheta $ is strictly decreasing in $\lambda _{3}$ and surjective we
obtain
$x_{n,n
}\rightarrow
-6\sqrt{3}$. Similarly, there are
$\omega _{n,1}<\bar{\omega }_{n,2}<\omega _{n,2}$ and
$\vartheta _{n,1}<\bar{\vartheta }_{n,2}<\vartheta _{n,2}$
that correspond to the zero $\bar{x}_{n,2}$ of
$Q_{n}\left( x\right) $. Again, we can observe from
$\omega _{n,2}+n\vartheta _{n,2}=2\pi $ that
$\lim _{n\rightarrow \infty }\vartheta _{n,2}=\lim _{n\rightarrow \infty }\frac{2\pi -\omega _{n,2}}{n}
=0$.
Since $
\bar{\vartheta }_{n,2}<\vartheta _{n,2}$ the same holds for
$\bar{\vartheta }_{n,2}$ and therefore, $\bar{x}_{n,2}\rightarrow 0$.

Let now $-6\sqrt{3}<x<0$. We know that there are unique
$0<\omega <\frac{\pi }{2}$ and $0<\vartheta <\pi $ corresponding to $x$. Since
$\bar{x}_{n,n}\rightarrow -6\sqrt{3}$ and
$\bar{x}_{n,2}\rightarrow 0$, we obtain that for
all $n$ large enough there are $2\leq k_{n}\leq n$, such that
$\omega _{n,k_{n}-1}\leq \omega \leq \omega _{n,k_{n}}$ and
$\vartheta _{n,k_{n}-1}\leq \vartheta \leq \vartheta _{n,k_{n}}$.
By passing to a
subsequence we can assume that $k_{n}/n$ converges. Since
$\omega _{n,k_{n}}+n\vartheta _{n,k_{n}}=k_{n}\pi $ we obtain that
$\vartheta _{n,k_{n}}=\pi k_{n}/n-\omega _{n,k_{n}}/n$ converges and the limit
has to be $\geq \vartheta $. Similarly
$\vartheta _{n,k_{n}-1}$ converges to the same limit but this time we know that
the limit must be $\leq \vartheta $. Let
$\omega _{n,k-1}<\bar{\omega }_{n,k}<\omega _{n,k}$ and
$\vartheta _{n,k-1}<\bar{\vartheta }_{n,k}<\vartheta _{n,k}$ be corresponding
to the zeros $\bar{x}_{n,k}$. Then
also
$\bar{\vartheta }_{n,k_{n}}$  converges
to the same limit
$
\vartheta $.
This implies that $\bar{x}_{n,k_{n}}$ converges and the limit
has to be $
x$.

\subsection{Proof of Corollary \ref{La}}

From Theorem \ref{reellenullstellen} we know that the polynomial
$P_{n}^{s,1}\left( x\right) $ has only real zeros. Laguerre
\cite{La83} showed
that then also the polynomial
with the coefficients $\frac{1}{k!}A_{n,k}^{s,1}$
has only real zeros.
In \cite{HN21A} we showed that the coefficients of
$P_{n}^{g,\func{id}}\left( x\right) $ are
$A_{n,k}^{g,\func{id}}=\frac{1}{k!}A_{n,k}^{s,1}$.
Therefore, also
$P_{n}^{g,\func{id}}\left( x\right) $ has only real zeros.

\section{Distribution of the zeros in the limit:
proof of Theorem \ref{Main}}
Let now $x_{n,k}=-\bar{x}_{n,k}$ for $1\leq k\leq n$ denote the
zeros of
$
Q_{n}\left( -x\right) $ and
$M_{m}=\lim _{n\rightarrow \infty }\frac{1}{n}\sum _{k=1}^{n}x_{n,k}^{m}$ for
$m\geq 0$ if it exists. The information about the $M_{m}$ for $m\geq 1$
can be obtained in
the following way ($M_{0}=1$).
Note that we are studying
$\left( -1\right) ^{n}Q_{n}\left( -x\right) =c_{1}\left( -\lambda _{1}\right) ^{n}+c_{2}\left( -\lambda _{2}\right) ^{n}+c_{3}\left( -\lambda _{3}\right) ^{n}$.
Let
$\hat{\lambda }=\hat{\lambda }\left( x\right) =-\lambda _{1}\left( -x\right) $.
Similar to
\cite{Fr71}
(III.9) we obtain:

\begin{proposition}
\label{momentenreihe}Let
$\ln \left( \frac{\hat{\lambda }}{x}\right) =-\sum _{m=1}^{\infty }
L_{m}\, \frac{x^{-m}}{m}
$
for $\left| x\right| >\rho $ for some $\rho >0$, then
$L_{m}=\lim _{n\rightarrow \infty }\frac{1}{n}\sum _{k=1}^{n}x_{n,k}^{m}=M_{m}$.
\end{proposition}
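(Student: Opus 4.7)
The plan is to match two Laurent expansions at infinity of one and the same limit function. By Theorem~\ref{reellenullstellen}, $(-1)^n Q_n(-x)$ is monic of degree $n$ with zeros $x_{n,k}\in[0,6\sqrt{3}]$, so on the exterior domain $\Omega := \{z \in \C : |z| > 6\sqrt{3}\}$ the function
\[
H_n(x) := \frac{(-1)^n Q_n(-x)}{x^n} = \prod_{k=1}^n \left(1 - \frac{x_{n,k}}{x}\right)
\]
is analytic, nonvanishing, and tends to $1$ at infinity. Since $\Omega \cup \{\infty\}$ is simply connected on the Riemann sphere, the branch of $\log$ normalized by $\log H_n(\infty) = 0$ is well defined on $\Omega$, and the expansion $\log(1-u) = -\sum_{m \geq 1} u^m/m$ yields
\[
\frac{1}{n}\log H_n(x) = -\sum_{m=1}^\infty \frac{S_{n,m}}{m\,x^m}, \qquad S_{n,m} := \frac{1}{n}\sum_{k=1}^n x_{n,k}^m.
\]
The bound $|S_{n,m}| \leq (6\sqrt{3})^m$ makes the convergence absolute and locally uniform in $\Omega$, with estimates independent of $n$.

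Next I would pass to the limit. Equation~(\ref{eq:wurzel}), applied with argument $-x$ for $x > 6\sqrt{3}$, gives $\sqrt[n]{(-1)^n Q_n(-x)} \to \hat\lambda(x)$, and hence $\tfrac{1}{n}\log H_n(x) \to \log(\hat\lambda(x)/x)$ pointwise on the real ray $(6\sqrt{3}, \infty) \subset \Omega$. The family $\{\tfrac{1}{n}\log H_n\}_n$ is locally uniformly bounded on $\Omega$ by the estimate above, so Vitali's theorem extends this pointwise convergence to locally uniform convergence throughout $\Omega$. The hypothesis of the proposition identifies the limit as $-\sum_{m \geq 1} L_m/(m\,x^m)$; uniform convergence on a circle $|x| = R > 6\sqrt{3}$ then implies, via Cauchy's integral formula, that the Laurent coefficients converge term by term. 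Comparing the coefficient of $x^{-m}$ yields $S_{n,m} \to L_m$, so $M_m$ exists and equals $L_m$.

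The step that requires the most care is the passage from the pointwise convergence of $\sqrt[n]{(-1)^n Q_n(-x)}$ on a real ray to locally uniform convergence on all of $\Omega$; this is what enables coefficient-by-coefficient matching of the two Laurent series. A direct alternative would be to verify the dominance $|\lambda_1(-x)| > |\lambda_{2,3}(-x)|$ together with uniform control of $c_2, c_3$ throughout $\Omega$, but combining the explicit uniform tail bound above with Vitali's theorem is cleaner. Once this analytic setup is in place, the rest of the argument reduces to the standard principle that uniform convergence of analytic functions implies convergence of their Laurent coefficients.
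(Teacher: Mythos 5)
Your proposal is correct and follows essentially the same route as the paper: the product representation of $(-1)^nQ_n(-x)$ over its zeros, the logarithmic expansion controlled by the uniform bound $\left|\frac{1}{n}\sum_{k}x_{n,k}^{m}\right|\le\kappa^{m}$, the identification of the limit with $\ln(\hat{\lambda}/x)$ via (\ref{eq:wurzel}), and term-by-term comparison of coefficients in $x^{-1}$. The only difference is in the last technical step, where the paper interchanges the limit with the uniformly dominated sum while you invoke Vitali's theorem and Cauchy's integral formula to make the coefficient-by-coefficient convergence explicit; this is a slightly more detailed justification of the same step, not a different argument.
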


\begin{proof}
We have
$\left( -1\right) ^{n}Q_{n}\left( -x\right) =\prod _{k=1}^{n}\left( x
-x_{n,k}\right) =x^{n}\prod _{k=1}^{n}\left( 1
-\frac{x_{n,k}}{x}\right) $.
From (\ref{eq:wurzel}) we obtain by continuity
\begin{equation}
\ln \left( \frac{\hat{\lambda }}{x}\right) =\lim _{n\rightarrow \infty }\frac{1}{n}\sum _{k=1}^{n}\ln \left( 1
-\frac{x_{n,k}}{x}\right)
\label{eq:stetig}
\end{equation}
By \cite{HNT20} (Theorem~7.1) we know that there is a
$\kappa >0$ independent from $n$ such that
$\left| x_{n,k}\right| \leq \kappa $ for all $1\leq k\leq n$.
Therefore
we can
expand the logarithm in
(\ref{eq:stetig}) as a series and obtain
\begin{equation}
\ln \left( \frac{\hat{\lambda }}{x}\right) =-\lim _{n\rightarrow \infty }\sum _{m=1}^{\infty }\frac{1}{n}\sum _{k=1}^{n}x_{n,k}^{m}\frac{
x^{-m}}{m}.
\label{eq:vertauschen}
\end{equation}
Again with \cite{HNT20} (Theorem~7.1) we obtain
$\left| \frac{1}{n}\sum _{k=1}^{n}x_{n,k}^{m}\right| \leq \kappa ^{m}$.
Therefore the series
$\sum _{m=1}^{\infty }\left( \frac{1}{n}\sum _{k=1}^{n}x_{n,k}^{m}\right) \frac{x^{-m}}{m}$
is bounded by the series
$\sum _{m=1}^{\infty }\frac{1}{m}\left( \frac{\kappa }{x}\right) ^{m}$
which is uniformly convergent for $\left| x\right| \geq \rho $
and any $\rho >\kappa $. Therefore we can
interchange in
(\ref{eq:vertauschen}) the limit and the infinite
sum and obtain the
result.
\end{proof}

Since we want to study the expansion at $\infty $, let
$\tilde{x}=-x^{-1}$ and
$\tilde{\lambda }=-\lambda ^{-1}$ (encoding also the
transition from $x$ to $-x$ and $\lambda $ to $-\lambda $ in the original
equation).
From (\ref{eq:charn2}) we
then obtain
$
\left( 1+\tilde{\lambda }\right) ^{3}-\left( \tilde{\lambda }-\tilde{\lambda }^{2}\right) \left( -x\right) =0$.
Therefore,
\begin{equation}
\tilde{
x}
=\frac{
\tilde{\lambda }
-\tilde{\lambda }^{2}}{\left( 1+\tilde{\lambda }\right) ^{3}}.
\label{eq:xvonlambda}
\end{equation}
\begin{proposition}
For some $\rho >0$ and all $\left| x\right| >\rho $
\[
\ln \left(
\lambda /x\right)
=
-\sum _{n=1}^{\infty }
\left( \sum _{k=0}^{n
}\binom{3n
}{k}\binom{2n
-1-k}{n
-1}\right)
\frac{\left(
-
x\right) ^{-n}
}{n}.
\]
\end{proposition}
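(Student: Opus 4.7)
The plan is to apply the Lagrange--B\"urmann inversion formula to (\ref{eq:xvonlambda}), using a logarithmic form that eliminates the derivative term. First I divide (\ref{eq:xvonlambda}) by $\tilde\lambda$ to obtain
\[
\frac{\tilde x}{\tilde\lambda} \;=\; \frac{1-\tilde\lambda}{(1+\tilde\lambda)^3} \;=\; \frac{1}{\psi(\tilde\lambda)}, \qquad \psi(\tilde\lambda) := \frac{(1+\tilde\lambda)^3}{1-\tilde\lambda}.
\]
Because $\tilde x = -x^{-1}$ and $\tilde\lambda = -\lambda^{-1}$ give $\tilde x/\tilde\lambda = \lambda/x$, this yields $\ln(\lambda/x) = -\ln\psi(\tilde\lambda)$. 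The relation (\ref{eq:xvonlambda}) itself reads $\tilde\lambda = \tilde x\,\psi(\tilde\lambda)$ with $\psi(0) = 1 \neq 0$, so by the implicit function theorem there is a unique analytic inverse $\tilde\lambda = \tilde\lambda(\tilde x)$ with $\tilde\lambda(0) = 0$ on some disk $|\tilde x| < 1/\rho$; correspondingly $\lambda = -1/\tilde\lambda$ is analytic on $|x|>\rho$ and blows up as $|x|\to\infty$, so it agrees with the dominant root used in (\ref{eq:wurzel}).

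Next I apply Lagrange--B\"urmann to $H(\tilde\lambda) := \ln\psi(\tilde\lambda)$, for which $H(0) = 0$ and $H'(\tilde\lambda) = \psi'(\tilde\lambda)/\psi(\tilde\lambda)$. The standard formula gives
\[
[\tilde x^n]\,H(\tilde\lambda) \;=\; \frac{1}{n}\,[\tilde\lambda^{n-1}]\,H'(\tilde\lambda)\,\psi(\tilde\lambda)^n \;=\; \frac{1}{n}\,[\tilde\lambda^{n-1}]\,\psi'(\tilde\lambda)\,\psi(\tilde\lambda)^{n-1}.
\]
The right-hand side equals $\frac{1}{n^2}\,[\tilde\lambda^{n-1}]\frac{d}{d\tilde\lambda}\psi(\tilde\lambda)^n$, and since $[\tilde\lambda^{n-1}]f'(\tilde\lambda) = n\,[\tilde\lambda^n]f(\tilde\lambda)$, the whole expression collapses to
\[
[\tilde x^n]\,\ln\psi(\tilde\lambda) \;=\; \frac{1}{n}\,[\tilde\lambda^n]\,\psi(\tilde\lambda)^n \;=\; \frac{1}{n}\,[\tilde\lambda^n]\,\frac{(1+\tilde\lambda)^{3n}}{(1-\tilde\lambda)^n}.
\]

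To conclude, I extract the coefficient by Cauchy multiplication: from $[\tilde\lambda^k](1+\tilde\lambda)^{3n} = \binom{3n}{k}$ and $[\tilde\lambda^{n-k}](1-\tilde\lambda)^{-n} = \binom{n-1+(n-k)}{n-k} = \binom{2n-1-k}{n-1}$, I obtain
\[
[\tilde\lambda^n]\,\frac{(1+\tilde\lambda)^{3n}}{(1-\tilde\lambda)^n} \;=\; \sum_{k=0}^n \binom{3n}{k}\binom{2n-1-k}{n-1}.
\]
Combining with $\ln(\lambda/x) = -\ln\psi(\tilde\lambda)$ and $\tilde x^n = (-1)^n x^{-n} = (-x)^{-n}$ produces the claimed series, convergent for $|x|>\rho$ by the analyticity disk above. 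I expect the main obstacle to be recognizing the logarithmic variant of Lagrange--B\"urmann that makes the derivative term telescope (so that one avoids extracting $[\tilde\lambda^{n-1}]$ of a rational function with the awkward numerator $2\tilde\lambda - 4$ coming from $H'(\tilde\lambda)$); matching the chosen analytic branch with the dominant root in (\ref{eq:wurzel}) is a mild check via the $\tilde\lambda(0) = 0 \leftrightarrow \lambda \to \infty$ correspondence, and the final combinatorial identification is then immediate.
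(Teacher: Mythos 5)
Your proof is correct and follows essentially the same route as the paper: both apply the Lagrange--B\"urmann formula to $\ln \Phi ( \tilde{\lambda } ) $ (your $\psi $ is the paper's $\Phi $), use the anti-derivative/telescoping identity to collapse $\frac{1}{n}[\tilde{\lambda }^{n-1}]\Phi '(\tilde{\lambda })\Phi (\tilde{\lambda })^{n-1}$ to $\frac{1}{n}[\tilde{\lambda }^{n}]\Phi (\tilde{\lambda })^{n}$, and extract the coefficient via the same Cauchy product of $(1+\tilde{\lambda })^{3n}$ and $(1-\tilde{\lambda })^{-n}$. The only difference is cosmetic: you make the analyticity radius and the identification of the branch with the dominant root slightly more explicit than the paper does.
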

\begin{proof}
We have
$\tilde{\lambda }=\tilde{x}\frac{\left( 1+\tilde{\lambda }\right) ^{3}}{1-\tilde{\lambda }}
$.
Let
$\Phi \left( \tilde{\lambda }\right) =\left( 1+\tilde{\lambda }\right) ^{3}\sum _{n=0}^{\infty }
\tilde{\lambda }^{n}$ be the series expansion of
$\frac{\left( 1+\tilde{\lambda }\right) ^{3}}{1-\tilde{\lambda }}$.
We now apply the
Lagrange--B\"{u}rmann formula
in the following way.
(See e.~g.\ \cite{Ge16}
(eq.\ 2.1.1) for a very good introduction and
for a reference to B\"{u}rmann
e.~g.\ \cite{He84}, Section~1.9.)
In its general form it says that for an analytic function $\Psi $
we have that the $n$th coefficient of the expansion of
$\Psi \left( \tilde{\lambda }\right) $ in $\tilde{x}$ is equal to the $n-1$st
coefficient of
$\frac{1}{n}\Psi ^{\prime }\left( \tilde{\lambda }\right) \left( \Phi \left( \tilde{\lambda }\right) \right) ^{n}$.
Applied to the present case we can obtain the $n$th coefficient
of $\Psi \left( \tilde{\lambda }\right) =\ln \left( \tilde{\lambda }/\tilde{x}
\right) =\ln \left( \Phi \left( \tilde{\lambda }\right) \right) $ as the $n-1$st coefficient of
$\frac{1}{n}\Psi ^{\prime }\left( \tilde{\lambda }\right) \left( \Phi \left( \tilde{\lambda }\right) \right) ^{n}=\frac{1}{n}\Phi ^{\prime }\left( \tilde{\lambda }\right) \left( \Phi \left( \tilde{\lambda }\right) \right) ^{n-1}$.
But
passing to the anti-derivative this is equal to the $n$th
coefficient of
$\frac{1}{n
}
\left( \Phi \left( \tilde{\lambda }\right) \right) ^{n
}$.
Since
\[
\frac{\left( 1+\tilde{\lambda }\right) ^{3n
}}{\left( 1-\tilde{\lambda }\right) ^{n
}}=\sum _{
k=0}^{3n
}\binom{3n
}{k
}\tilde{\lambda }^{k
}\sum _{j
=0}^{\infty }\binom{
j+n
-1}{n
-1}\tilde{\lambda }^{
j},
\]
this yields
$
L_{n}=\sum _{k=0}^{n
}\binom{3n
}{k}\binom{2n
-1-k}{n
-1}$.
\end{proof}
\begin{corollary}
For $m\geq 1$ we obtain
\[
L_{m}=4^{m}\binom{3m/2-1/2}{m}.
\]
\end{corollary}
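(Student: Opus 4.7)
The plan is to first recast the sum as a single coefficient extraction and then to evaluate it by a hypergeometric summation identity.

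From the preceding proposition we have $L_m = [\tilde{\lambda}^m]\,\Phi(\tilde{\lambda})^m$ with $\Phi(\tilde{\lambda}) = (1+\tilde{\lambda})^3/(1-\tilde{\lambda})$. I would first apply the M\"{o}bius substitution $\tilde{\lambda} = v/(1+v)$, under which $1+\tilde{\lambda} = (1+2v)/(1+v)$, $1-\tilde{\lambda} = 1/(1+v)$, and $d\tilde{\lambda} = dv/(1+v)^2$. Carrying this through the Cauchy residue integral $[\tilde{\lambda}^m]f = \mathrm{Res}_{\tilde{\lambda}=0}\,f/\tilde{\lambda}^{m+1}\,d\tilde{\lambda}$ yields the cleaner form
\[
L_m \;=\; [v^m]\,\frac{(1+2v)^{3m}}{(1+v)^{m+1}}.
\]
The point of this step is that the troublesome pole of $\Phi$ at $\tilde{\lambda}=1$ has been sent to infinity, leaving only a single power of $(1+v)$ in the denominator.

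Next, expanding by the binomial theorem,
$(1+2v)^{3m} = \sum_k \binom{3m}{k}2^k v^k$ and
$(1+v)^{-(m+1)} = \sum_j (-1)^j\binom{m+j}{j}v^j$,
and collecting gives $L_m = (-1)^m\binom{2m}{m}\,{}_2F_1(-3m,-m;-2m;2)$. The crux of the proof is then to evaluate this terminating Gaussian hypergeometric at the non-standard argument $z=2$. My plan is to use a quadratic Kummer--Pfaff type transformation to move the argument to $z=-1$, where Kummer's classical summation formula
\[
{}_2F_1(a,b;\,1+a-b;\,-1) \;=\; \frac{\Gamma(1+a-b)\,\Gamma(1+a/2)}{\Gamma(1+a)\,\Gamma(1+a/2-b)}
\]
evaluates the result explicitly. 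Applying Legendre's duplication $\Gamma(z)\Gamma(z+1/2) = 2^{1-2z}\sqrt{\pi}\,\Gamma(2z)$ then collapses the resulting ratio of Gammas to exactly $4^m\binom{3m/2-1/2}{m}$.

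The main obstacle is the quadratic transformation step: because the lower parameter $c=-2m$ is a negative integer, several of the standard transformations become formally singular and must be interpreted as identities between polynomials in the terminating case, so validity must be checked carefully on the polynomial level. Should this route prove too delicate, an alternative is to bypass the hypergeometric machinery entirely: the substitution $y=(1-\tilde{\lambda})/(1+\tilde{\lambda})$ reduces the defining cubic (\ref{eq:xvonlambda}) to $y-y^3 = 4\tilde{x}$ with $y(0)=1$; diagonal extraction then gives $\sum_m L_m z^m = 2y/(3y^2-1)$, which via implicit differentiation of the cubic integrates to the clean relation $\sum_m L_m z^m/(m+1) = 1/y(z)$. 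Lagrange inversion applied to $u = 1-y$, satisfying $u(1-u)(2-u)=4z$, then yields $L_m$ coefficient-by-coefficient and matches it with $4^m\binom{3m/2-1/2}{m}$.
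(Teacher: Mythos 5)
The paper's own proof of this corollary is a one-line citation to Gessel \cite{Ge14} (p.~4 and Theorem~3); you are attempting the harder task of actually proving the identity, and your reductions are correct as far as they go: the substitution $\tilde{\lambda}=v/(1+v)$ does give $L_m=[v^m]\,(1+2v)^{3m}(1+v)^{-(m+1)}$, and this is indeed $(-1)^m\binom{2m}{m}\,{}_2F_1(-3m,-m;-2m;2)$, terminating at $k=m$. But the crux --- summing this terminating ${}_2F_1$ at argument $2$ with lower parameter $-2m$ --- is exactly the step you leave as a plan, and the plan as stated does not go through safely. Your proposed safeguard (``interpret the transformations as identities between polynomials'') is not sufficient: Euler's transformation read this way is simply false in this regime. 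For $m=1$ one has ${}_2F_1(-1,-3;-2;2)=1-3=-2$, while $(1-2)^{c-a-b}\,{}_2F_1(c-a,c-b;c;2)={}_2F_1(-1,1;-2;2)=1+1=2$; the reason is that for generic $c$ the transformed series does not terminate and diverges at $z=2$, so no continuity argument is available. The quadratic route is also delicate in a way you have not addressed: the natural transformation for $c=2b$ sends $z=2$ to $z^2/(4(z-1))=1$, and Gauss's theorem then produces $\Gamma(\tfrac12-m)\sqrt{\pi}\,/\,\bigl(\Gamma(\tfrac{1+m}{2})\Gamma(\tfrac{1-3m}{2})\bigr)$ up to a factor $(-1)^{3m/2}$. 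This checks out for even $m$ (for $m=2$ it gives $30$), but for odd $m$ it is a $0\cdot\infty$ expression with a non-real prefactor, since $\Gamma(\tfrac{1-3m}{2})$ then sits at a pole; at minimum the parities must be separated and the odd case handled by a limiting argument or a different transformation. As written, the proof is not complete.

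Your fallback via $y-y^{3}=4\tilde{x}$ is likewise sound in its reductions (the relations $\sum_{m\geq 0}L_m z^m/(m+1)=1/y$ and $u(1-u)(2-u)=4z$ with $u=1-y$ both check out), but it ends with ``matches it with $4^m\binom{3m/2-1/2}{m}$'': Lagrange inversion leaves you with $\frac{(m+1)4^m}{m}[u^{m-1}]\,(1-u)^{-m-2}(2-u)^{-m}$, which is again a two-parameter binomial convolution of essentially the same difficulty as the sum you started from, so nothing has been gained. Either route can presumably be completed, but the evaluation you need at the end is precisely the content of the cited theorem of Gessel, and neither version of your argument actually establishes it.
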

\begin{proof}
This follows from
\cite{Ge14}
(page~4 and Theorem~3).
\end{proof}

\begin{proposition}
Let $\varepsilon >0$, $0\leq a<b\leq 6\sqrt{3}$,
and $\chi $ the characteristic function of
$\left[ a,b\right] $. Then there are polynomials $R\left( x\right) $ and
$S\left( x\right) $, such that
\begin{equation}
R\left( x\right) \leq \chi \left( x\right) \leq S\left( x\right) \qquad (0\leq x\leq 6\sqrt{3})
\label{eq:freud9.7}
\end{equation}
and
\begin{equation}
\int _{0}^{6\sqrt{3}}\left( S\left( x\right) -R\left( x\right) \right) v\left( x\right) \,\mathrm{d}x<\varepsilon .
\label{eq:freud9.8}
\end{equation}
\end{proposition}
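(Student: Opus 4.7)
The plan is to prove this standard sandwich proposition in the spirit of Freud \cite{Fr71} by first approximating $\chi$ from above and below by continuous functions in $L^{1}\left( v\,\mathrm{d}x\right) $, and then invoking the Weierstrass approximation theorem to replace them by polynomials. The key input I would use is that $v$ is integrable on $\left[ 0,6\sqrt{3}\right] $: from the closed form (\ref{eq:v}) the only singularities arise from the factor $x^{-2/3}$ at $x=0$ and from $\left( 1-x^{2}/108\right) ^{-1/2}$ at $x=6\sqrt{3}$, both of which are classically integrable, and the resulting finite measure is absolutely continuous with respect to Lebesgue measure.

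Fix $\varepsilon >0$. Using absolute continuity I would choose $\delta >0$ so small that the integral of $v$ over each of the two sets $\left[ \max \left( a-\delta ,0\right) ,a+\delta \right] $ and $\left[ b-\delta ,\min \left( b+\delta ,6\sqrt{3}\right) \right] $ is at most $\varepsilon /4$. Then I would construct two continuous trapezoid functions $r,s:\left[ 0,6\sqrt{3}\right] \rightarrow \left[ 0,1\right] $ satisfying $r\leq \chi \leq s$: the function $s$ equals $1$ on $\left[ a,b\right] $, vanishes outside the $\delta $-enlargement of $\left[ a,b\right] $ (truncated to $\left[ 0,6\sqrt{3}\right] $), and interpolates linearly in between; the function $r$ equals $1$ on $\left[ a+\delta ,b-\delta \right] $, vanishes outside $\left[ a,b\right] $, and interpolates linearly. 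Since $s-r$ is supported on the two $\delta $-neighborhoods of $a$ and $b$ and satisfies $0\leq s-r\leq 1$, the choice of $\delta $ yields $\int _{0}^{6\sqrt{3}}\left( s-r\right) v\,\mathrm{d}x<\varepsilon /2$.

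Finally, by the Weierstrass approximation theorem there exist polynomials $\widetilde{R}$ and $\widetilde{S}$ with $\left\| \widetilde{R}-r\right\| _{\infty }<\eta $ and $\left\| \widetilde{S}-s\right\| _{\infty }<\eta $ on $\left[ 0,6\sqrt{3}\right] $, for any prescribed $\eta >0$. Setting $R:=\widetilde{R}-\eta $ and $S:=\widetilde{S}+\eta $ restores the pointwise inequalities $R\leq r\leq \chi \leq s\leq S$ on $\left[ 0,6\sqrt{3}\right] $, and
\[
\int _{0}^{6\sqrt{3}}\left( S-R\right) v\,\mathrm{d}x\leq \int _{0}^{6\sqrt{3}}\left( s-r\right) v\,\mathrm{d}x+2\eta \int _{0}^{6\sqrt{3}}v\,\mathrm{d}x<\frac{\varepsilon }{2}+2\eta \int _{0}^{6\sqrt{3}}v\,\mathrm{d}x,
\]
which is less than $\varepsilon $ upon choosing $\eta $ small. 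The only real obstacle is the handling of the endpoints $0$ and $6\sqrt{3}$, where the neighborhood construction must be truncated and where $v$ blows up; it is precisely the integrability of $v$ at these points that makes the $\delta $-neighborhood integrals controllable, while every other step is routine.
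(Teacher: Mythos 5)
Your argument is correct and follows essentially the same route as the paper: sandwich $\chi$ between continuous trapezoid functions supported/constant on slightly shrunken and enlarged intervals whose $v$-mass near $a$ and $b$ is small, apply Weierstrass on the compact interval $\left[ 0,6\sqrt{3}\right] $, and shift the approximating polynomials by the uniform error to restore the pointwise inequalities. The paper uses cut points $a_{-}<a<a_{+}<b_{-}<b<b_{+}$ with mass $<\varepsilon /8$ on each gap and a fixed shift $\varepsilon /8$ instead of your $\delta $-neighborhoods and free parameter $\eta $, but these are only cosmetic differences.
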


\begin{proof}
Let $a_{-}<a<a_{+}<b_{-}<b<b_{+}$ (if $a=0$ or $b=6\sqrt{3}$ we can choose
$a_{-}=0$ or $b_{+}=6\sqrt{3}$, resp.) such that
\[
\int _{a_{-}}^{a}v\left( x\right) \,\mathrm{d}x,\int _{a}^{a_{+}}v\left( x\right) \,\mathrm{d}x,\int _{b_{-}}^{b}v\left( x\right) \,\mathrm{d}x,\int _{b}^{b_{+}}v\left( x\right) \,\mathrm{d}x<\frac{\varepsilon }{8}.
\]
There then exist continuous functions
$\varphi ,\psi :\left[ 0,6\sqrt{3}\right] \rightarrow \mathbb{R}$ such that
$\chi \left( x\right) -1\leq \varphi \left( x\right) \leq \chi \left( x\right) \leq \psi \left( x\right) \leq \chi \left( x\right) +1$,
$\varphi \left( x\right) =0$ for $0\leq x\leq a$ and $b\leq x\leq 6\sqrt{3}$,
$\varphi \left( x\right) =1$ for $a_{+}\leq x\leq b_{-}$,
$\psi \left( x\right) =0$ for $0\leq x\leq a_{-}$ and
$b_{+}\leq x \leq 6\sqrt{3}$, and $\psi \left( x\right) =1$ for
$a\leq x\leq b$. Since $\left[ 0,6\sqrt{3}\right] $ is compact
and $\varphi ,\psi $ are continuous by the Stone--Weierstrass
theorem
there are polynomials
$\tilde{R}\left( x\right) $ and $\tilde{S}\left( x\right) $
which uniformly
approximate $\varphi $
and $\psi $,
resp. In particular such that
$
\left| \varphi \left( x\right) -\tilde{R}\left( x\right) \right| <\frac{\varepsilon }{8}$
and
$
\left| \tilde{S}\left( x\right) -\psi \left( x\right) \right| <\frac{\varepsilon }{8}$.
Let
$R\left( x\right) =\tilde{R}\left( x\right) -\frac{\varepsilon }{8}$
and $S\left( x\right) =\tilde{S}\left( x\right) +\frac{\varepsilon }{8}$.
Then
$\varphi \left( x\right) -\frac{\varepsilon }{4}<R\left( x\right) <\varphi \left( x\right) $
and $\psi \left( x\right) <S\left( x\right) <\psi \left( x\right) +\frac{\varepsilon }{4}$.
Therefore,
\begin{eqnarray*}
&&\int _{0}^{6\sqrt{3}}\left( S\left( x\right) -R\left( x\right) \right) v\left( x\right) \,\mathrm{d}x\\
&\leq &\int _{0}^{6\sqrt{3}}\left( S\left( x\right) -\psi \left( x\right) \right) v\left( x\right) \,\mathrm{d}x+\int _{0}^{6\sqrt{3}}\left( \psi \left( x\right) -\chi \left( x\right) \right) v\left( x\right) \,\mathrm{d}x\\
&&{}+\int _{0}^{6\sqrt{3}}\left( \chi \left( x\right) -\varphi \left( x\right) \right) v\left( x\right) \,\mathrm{d}x+\int _{0}^{6\sqrt{3}}\left( \varphi \left( x\right) -R\left( x\right) \right) v\left( x\right) \,\mathrm{d}x\\
&<&\frac{\varepsilon }{4}+\int _{a_{-}}^{a}v\left( x\right) \,\mathrm{d}x+\int _{b}^{b_{+}}v\left( x\right) \,\mathrm{d}x+\int _{a}^{a_{+}}v\left( x\right) \,\mathrm{d}x+\int _{b_{-}}^{b}v\left( x\right) \,\mathrm{d}x+\frac{\varepsilon }{4}\\
&<&\frac{\varepsilon }{2}+\frac{4\varepsilon }{8}=\varepsilon .
\end{eqnarray*}
\end{proof}
\subsection{Final steps for the proof of Theorem \ref{Main}}
It follows from Theorem \ref{dichte} (\cite{MP14}) that for a polynomial
$T\left( x\right) $ holds that
\begin{equation}
\lim _{n\rightarrow \infty }\frac{1}{n}\sum _{k=1}^{n}T
\left( x_{n,k}\right) =\int _{0}^{6\sqrt{3}}T\left( x\right) v\left( x\right) \,\mathrm{d}x.
\label{eq:freud9.6}
\end{equation}
Let $R\left( x\right) $ and $S\left( x\right) $ be chosen such that
(\ref{eq:freud9.7}) and (\ref{eq:freud9.8}) are satisfied. Then
\begin{equation}
\int _{a}^{b}v\left( x\right) \,\mathrm{d}x-\varepsilon <\int _{a}^{b}R\left( x\right) v\left( x\right) \,\mathrm{d}x\leq \int _{a}^{b}S\left( x\right) v\left( x\right) \,\mathrm{d}x<\int _{a}^{b}v\left( x\right) \,\mathrm{d}x+\varepsilon .
\label{eq:freud9.9}
\end{equation}
It follows from (\ref{eq:freud9.7}), (\ref{eq:freud9.6}), and (\ref{eq:freud9.9}) that
\begin{eqnarray*}
\int _{a}^{b}v\left( x\right) \,\mathrm{d}x-\varepsilon &\leq &\int _{a}^{b}R\left( x\right) v\left( x\right) \,\mathrm{d}x=\lim _{n\rightarrow \infty }\frac{1}{n}\sum _{k=1}^{n}R\left( x_{n,k}\right) \\
&\leq &\liminf _{n\rightarrow \infty }\frac{1}{n}\sum _{k=1}^{n}
\chi \left( x_{n,k}\right) \leq \limsup _{n\rightarrow \infty }\frac{1}{n}\sum _{k=1}^{n}
\chi \left( x_{n,k}\right) \leq \lim _{n\rightarrow \infty }S\left( x_{n,k}\right) \\
&=&\int _{a}^{b}S\left( x\right) v\left( x\right) \,\mathrm{d}x\leq \int _{a}^{b}v\left( x\right) \,\mathrm{d}x+\varepsilon .
\end{eqnarray*}
\subsection{Proof of Theorem \ref{cumu}}
It can be seen from \cite{MP14}, that we have
$v\left( x\right) =v_{3/2,-1/2}\left( x/4\right) /4$ for a certain density
function, which can be given as
$$
v_{3/2,-1/2}\left( x\right)
=\frac{\left({1+\sqrt{1-z}}\right)^{2/3}}{3\pi\sqrt{3(1-z)}}z^{-1/3}
+\frac{\left({1+\sqrt{1-z}}\right)^{-2/3}}{3\pi\sqrt{3(1-z)}}z^{1/3}$$
with $z=4x^{2}/27$, $0<x<\sqrt{\frac{27}{4}}$.
Then   
\begin{equation*}
\int v\left( x\right) \,\mathrm{d}x  =  \int v_{3/2,-1/2}\left( x/4\right) /4\,\mathrm{d}x , 
\end{equation*}
which is equal to
\begin{equation*}
\int \left( \frac{\left({1+\sqrt{1-\frac{x^{2}}
{108}}}\right)^{2/3}}{
12\pi\sqrt{3\left( 1-\frac{x^{2}}
{108}\right) }}\left( \frac{x^{2}}
{108}\right) ^{-1/3}
+\frac{\left({1+\sqrt{1-\frac{x^{2}}
{108}}}\right)^{-2/3}}{
12\pi\sqrt{3\left( 1-\frac{x^{2}}
{108}\right) }}\left( \frac{x^{2}}
{108}\right) ^{1/3}\right)
\,\mathrm{d}x.
\end{equation*}
We substitute $ x=\sqrt{108\left( 1-u^{2}\right)}$. Then
$\frac{\mathrm{d}x
}{\mathrm{d}u
}=-\frac{108u
}{\sqrt{108\left( 1-
u^{2}\right)
}}$ and we obtain
\begin{eqnarray*}
&&\int v\left( x\right) \,\mathrm{d}x\\
&=&
-\int \left( \frac{\left( 1+u\right) ^{2/3}}{12
\pi \sqrt{3}u}\left( 1-u^{2}\right) ^{-1/3}+\frac{\left( 1+u\right) ^{-2/3}}{
12\pi \sqrt{3}u}\left( 1-u^{2}\right) ^{1/3}\right) \frac{108u}{\sqrt{108\left( 1-u^{2}\right) }}\,\mathrm{d}u\\
&=&
-\frac{
1
}{2
\pi }\int \left( \left( \frac{1+u}{1-u}\right) ^{1/3}+\left( \frac{1-u}{1+u}\right) ^{1/3}\right)
\frac{1}{\sqrt{
1-u^{2}
}}\,\mathrm{d}x.
\end{eqnarray*}
Now we substitute $u=\frac{1-w}{1+w}$ and obtain
$\frac{\mathrm{d}u}{\mathrm{d}w}=-\frac{2}{\left( 1+w\right) ^{2}}$ and
\[
\int v\left( x\right) \,\mathrm{d}x=\frac{1
}{2\pi }\int \frac{
w^{-1/6}+w^{-5/6}}{\left( 1+w\right) ^{2}}\,\mathrm{d}w.
\]
Further, we substitute $w=z^{6}$. Then
$\frac{\mathrm{d}w}{\mathrm{d}z}=6z^{5}$ and we obtain
\begin{eqnarray*}
\int v\left( x\right) \,\mathrm{d}x&=&\frac{3
}{\pi }\int \frac{1+z^{4}}{1+z^{6}}\,\mathrm{d}z\\
&=&\frac{1}{\pi }\left( 2\arctan \left( z
\right) +\arctan \left( 2z
-\sqrt{3}\right) +\arctan \left( 2z
+\sqrt{3}\right) \right) .
\end{eqnarray*}
This we can show by deriving
\begin{eqnarray*}
&&\frac{\mathrm{d}}{\mathrm{d}z}\left( 2\arctan \left( z\right) +\arctan \left( 2z-\sqrt{3}\right) +\arctan \left( 2z+\sqrt{3}\right) \right) \\
&=&\frac{2}{1+z^{2}}+\frac{2}{4-4\sqrt{3}z+4z^{2}}+\frac{2}{4+4\sqrt{3}z+4z^{2}}\\
&=&\frac{2}{1+z^{2}}+\frac{1+z^{2}}{1-z^{2}+z^{4}}=3\frac{1+z^{4}}{1+z^6}.
\end{eqnarray*}
Re-substituting, we obtain
$z=\sqrt[6]{\frac{1-\sqrt{1-x^{2}/108}}{1+\sqrt{1-x^{2}/108}}}$.

\end{document}